\def\slika #1{\begin{center} \epsffile{#1} \end{center}}
\newtheorem{theorem}{Theorem}
\newtheorem{observation}{Observation}
\newtheorem{definition}[theorem]{Definition}
\newtheorem{lemma}[theorem]{Lemma}
\newtheorem{proposition}[theorem]{Proposition}
\newtheorem{remark}[theorem]{Remark}
\title{Carath$\'{e}$odory Number and Exchange Number in $\Delta$-convexity} \date{}
\author{
Bijo S. Anand$^1$,
Arun Anil$^2$,
Manoj Changat$^2$,
Prasanth G. Narasimha-Shenoi$^{3,4}$\\ and
Sabeer S. Ramla$^5$
}
\begin{document}
\maketitle
\begin{center}
{\small

$^1$Department of Mathematics, Sree Narayana College, Punalur\\ Kollam, Kerala, India - 691305.\\ e-mail: bijos$\_$anand@yahoo.com\\

$^{2}$Department of Futures Studies, University of Kerala, \\Thiruvananthapuram, Kerala, India - 695581.\\
e-mails: arunanil93@gmail.com, mchangat@keralauniversity.ac.in,\\

$^3$Department of Mathematics, Government College Chittur,\\ Palakkad, Kerala, India - 678104.\\ e-mail: prasanthgns@gmail.com \\

$^4$Department of Collegiate Education, Government of Kerala, \\Thiruvananthapuram, Kerala, India - 695033.

$^5$Department of Mathematics, Thangal Kunju Musaliar Institute of Technology, \\ Musaliar Hills, Karuvelil P.O., Ezhukone, Kollam, Kerala, India - 691505.\\ e-mail: sabeersainmaths@gmail.com
}
\end{center}
\begin{abstract}
    Given a graph $G$, a set is $\Delta$ convex if there is no vertex $u\in V(G)\setminus S$
that forms a triangle with two vertices of $S$. The $\Delta$-convex hull of $S$ is the minimum $\Delta$-convex set containing $S$. This article is an attempt to discuss the Carath\'eodory number and exchange number on various graph families and standard graph products namely Cartesian, strong and lexicographic products of graphs.
\end{abstract}

\noindent Keywords: Carath\'eodory number, exchange number, graph products, block graphs.\\
\noindent AMS subject classification: 05C38, 05C76, 05C99, 52A01.
\section{Introduction}
Like many fundamental concepts in mathematics, convexity permeates various disciplines in mathematics like geometry, optimization, economics, and functional analysis.  For more see ~\cite{boyd2004convex,rockefellar1970convex}. Van de Vel \cite{van_de_vel}, developed a comprehensive axiomatic structure for convexity theory that later became a building stone for many researchers to study convexity and its parameters in a large way.  A convexity on a set is a family of subsets stable for intersection and nested unions.  

It can be seen that many notions of convexity have been introduced and studied in the cases of graphs also. The pioneers like Degreef, Doignon et al., Jamison, Reay, Sierksma, and van de Vel studied exchange properites of convex sets in axiomatic convexity, see \cite{degreef1982convex,doignon1981tverberg,jamison1974general,reay1965generalizations,sierksma1976relationships,van_de_vel}.  The invariant exchange number was introduced and studied by Sierksma in 1975 \cite{sierksma1975caratheodory}. Similarly, the exchange number and Carath\'eodory number when the geodesic convexity is considered on graphs have been studied in \cite{anand2020caratheodory}.  The relationship between the exchange number and classical convexity invariants like the Carath\'eodory, Helly and Radon numbers has been studied by the above authors. It is worth to mention the remark by van de
Vel \cite{van_de_vel} that the exchange number is the only invariant that exactly determines the dimension of convexity spaces and plays an essential role in proving the equivalence of classical Helly’s and Carath\'eodory’s theorems.\\
In continuation to these studies, various authors studied different type of convexities on graphs,  see \cite{dourado2016geodetic,farber1987bridged,pelayo2013geodesic} where the convexity under consideration is the geodesic convexity; see \cite{changat2004induced,duchet1988convex,morgana2002induced} when
 the monophonic convexity is considered, see \cite{centeno2011irreversible,centeno2013geodetic} for the case of $P_3$-convexity and see \cite{changat2005convexities,dourado2016complexity} when the triangle path convexity is under consideration. Among the studied problems are the counterparts of the classical parameters Carath\'eodory, Helly, and Radon numbers. These parameters have been studied in the monophonic convexity in \cite{duchet1987convexity}, in the triangle path convexity in \cite{changat1999triangle} and in the $P_3$-convexity in \cite{coelho2014caratheodory,dourado2013algorithmic}. For invariants and related structures see \cite{anand2015helly,sierksma1980convexity}.  For a more general source see \cite{van_de_vel}.  Another convexity that is studied very recently is the $\Delta$-convexity.  In \cite{anand2020convexity}, authors studied the $\Delta$-number and $\Delta$-convexity number and in particular, the exact value for the convexity number of block graphs with diameter $\leq 3$ and on graph products namely strong and lexicographic product of two graphs. In \cite{bsaamc}, authors prove that computing the $\Delta$-convex hull of a general graph is NP-complete and also discussed various polynomial type algorithms for some special graph families. 
 
Throughout this article, we consider finite, simple, and undirected graphs $G=(V(G), E(G))$, where $V(G)$ is the vertex set and $E(G)$ is the edge set of the graph $G$.  If there is no confusion we use $V, E$ for vertex set and edge set of $G$. A graph $H$ is said to be a \emph{subgraph} of $G$ if $V(H)\subseteq V(G)$ and $E(H)\subseteq E(G)$. A subgraph $H$ of $G$ is an \emph{induced subgraph} of $G$ if for $u,v\in V(H)$
and $uv\in E(G)$ implies $uv\in E(H)$.  By a \textit{chordal} graph we mean a graph in which all cycles of four or more vertices have a chord.  A graph $G$ is said to be \textit{complete}, if every vertex is adjacent to every other vertex.  A complete graph on $n$ vertices is denoted by $K_n$.  A graph $G$ is said to be $H$-\textit{free} if $G$ does not contain $H$ as an induced subgraph.  For example, $G$ is said to be triangle-free, if $G$ does not contain $K_3$ as an induced subgraph.  A \textit{block} in a graph 
$G$ is a maximal connected subgraph of 
$G$ that does not have a cut vertex. A graph $G$ is said to be a \textit{block graph} if every block is complete.  A \textit{chain of blocks} is a sequence of blocks in a graph such that each consecutive pair of blocks shares a common cut vertex. For further reading, refer \cite{west2001introduction}.

Given a graph $G$ and a set $S \subseteq V(G)$, the {\em $\Delta$-interval of $S$}, $[S]_\Delta$, is the set formed by the vertices of $S$ and every $w \in V(G)$ such that $w$ forms a triangle with two vertices of $S$. Sometimes it will be useful to write $[u,v]_\Delta$ standing for $[\{u,v\}]_\Delta$. If $[S]_\Delta = S$, then $S$ is {\em $\Delta$-convex of $G$}. A finite \emph{convexity space} is a pair $(X,\mathcal{C})$ where $X$ is a finite set and $\mathcal{C}$ is a collection of subsets of $X$ such that; $\emptyset, X \in \mathcal{C}$ and $\mathcal{C}$ is closed under intersections.
The sets in $\mathcal{C}$ are considered the \textit{convex sets}. The \textit{convex hull}  of a subset $S$ of $X$ is the intersection of all convex sets containing $S$, denoted by $\langle S\rangle$. Equivalently, it is the smallest superset of $S$ in $\mathcal{C}$.

Let $S\subseteq X$ be a nonempty finite set.
The set $S$ is \textit{Carath\'{e}odory dependent} (or,\textit{C-dependent}) provided $\displaystyle\langle S \rangle\subseteq\bigcup_{a\in S}\langle S \setminus \{a\}\rangle ,$ and it is \textit{Carath\'{e}odory independent} (or, \textit{C-independent})  otherwise. That is, a subset $S$ of $X$ is said to be C-independent, if there is $\displaystyle p\in \langle S\rangle\setminus\bigcup_{a\in S}\langle S\setminus\{a\}\rangle$.
The set $S$ is called \textit{exchange dependent}(or, \textit{E-dependent}), provided for each $p \in S$, $\displaystyle\langle S \setminus \{p\}\rangle\subseteq\bigcup_{a\in S\setminus\{p\}} \langle S \setminus \{a\}\rangle$, and it is called \textit{exchange independent } (or, \textit{E-independent}) otherwise. That is, $S$  is said to be E-independent, if  $|S|=1$ or there is a $p\in S$ such that  $\displaystyle p'\in \langle S\setminus\{p\}\rangle\setminus\bigcup_{a\in S\setminus\{p\}}\langle S\setminus\{a\}\rangle$. In this case, we say that $p$ is a pivot of $S$.
The set $S$ is  \emph{Helly dependent} (or, \emph{H-dependent}) provided $\displaystyle \bigcap_{a\in S}\langle S\setminus\{a\}\rangle \neq \emptyset$, and it is \emph{Helly independent} (or, \emph{H-independent})  otherwise.

A convex structure $X$ gives rise to the following numbers $c(X),e(X), h(X) \in \{0,1,2,3,\ldots, \infty\}$ determined by the following prescription. For each $n$ with $0\leq n < \infty$,
$c(X)\leq n$ if and only if each finite set $S\subseteq X$ with $|S|>n$ is C-dependent;
$e(X)\leq n$ if and only if each finite set $S\subseteq X$ with $|S|>n$ is E-dependent;  $h(X)\leq n$ if and only if each finite set $S\subseteq X$ with $|S|>n$ is H-dependent;
where $c(X)$ is the \textit{Carath\'{e}odory number} of $X$,  $e(X)$ is the \textit{exchange number} of $X$ and $h(X)$ is the \textit{Helly number} of $X$.
That is, $c(X)$ is the maximum cardinality of a $C$-independent set,
$e(X)$ is the maximum cardinality of a $E$-independent set and $h(X)$ is the maximum cardinality of a $H$-independent set.

In $\Delta$-convexity, the parameters Carath\'eodory, exchange and Helly are respectively denoted as $c_\Delta(X)$, $e_\Delta(X)$ and $h_\Delta(X)$ or simply $c,e,h$ if there is no confusion.

A
property $\pi$ is \textit{hereditary} if every subset of a set with property $\pi$ also has property $\pi$. It is known that Radon and Helly
independence are hereditary properties while Carath\'eodory and exchange independence are not hereditary in general, see \cite{van_de_vel}.

For all three products of (simple) graphs, $G$ and $H$ the vertex set of the product is $V(G)\times V(H)$. Their edge sets are defined as follows. In the \emph{Cartesian product} $G\Box H$ two vertices are adjacent if they are adjacent in one coordinate and equal in the other. Two vertices $(g_1, h_1)$ and $(g_2, h_2)$ in the strong product of $G$ and $H$, $G\boxtimes H$ are adjacent  if one of the following holds: 
i) $g_1g_2\in E(G)$ and $h_1 = h_2$, 
ii) $g_1 = g_2$ and $h_1h_2\in E(H)$, or 
iii) $g_1 g_2\in E(G)$ and $h_1h_2\in E(H)$.

 Finally, two vertices $(g,h)$ and $(g^{\prime },h^{\prime })$ are adjacent in
\emph{lexicographic product} $G\circ H$ (also $G[H]$) if $gg^{\prime}\in E(G)$ or if $g=g^{\prime }$ and $hh^{\prime }\in E(H)$. For
$\ast \in \{\Box ,\boxtimes,\circ \}$ we call the product $G\ast H$ \emph{nontrivial}, if both $G$ and $H$ have at least two vertices.
For $h\in V(H)$, $g\in V(G)$, and $\ast \in \{\Box ,\boxtimes,\circ \}$, call $G^{h}=\{(g,h)\in G\ast H:\,g\in V(G)\}$ a $G$ \emph{layer} in $G\ast H$,
and call $^{g}H=\{(g,h)\in G\ast H:\,h\in V(H)\}$ an $H$ $\emph{layer}$ in $G\ast H$. Note that the subgraph of $G\ast H$ induced on $G^{h}$ is isomorphic to $G$ and the subgraph of $G\ast H$ induced on $^{g}H$ is isomorphic to $H$ for $\ast
\in \{\Box ,\boxtimes ,\circ \}$. The map
$p_{G}:V(G\ast H)\rightarrow V(G)$ defined with $p_{G}((g,h))=g$ is called a \emph{projection map onto} $G$ for $\ast \in \{\Box ,\boxtimes ,\circ \}$. Similarly, we can define the \emph{projection map onto} $H$.
Let $S\subset V(G)$. With $\left\langle S\right\rangle $ we denote the subgraph of $G$ induced by $S$. 

\begin{theorem}[Sierksma Inequalities \cite{van_de_vel}]\label{inequalities}
For all convex structures, $e - 1 \leq c \leq \max\{h, e - 1\}$, where $e$ is the exchange number, $h$ is the Helly number, and $c$ is the Carath\'eodory number.
\end{theorem}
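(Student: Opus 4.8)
The statement splits into the two inequalities $e-1\le c$ and $c\le\max\{h,e-1\}$, and I would prove them separately. It is convenient to first record the reformulation that a finite set $S$ is $C$-independent precisely when it admits a witness $p\in\langle S\rangle$ with $p\notin\langle S'\rangle$ for every proper subset $S'\subsetneq S$; indeed, since every proper subset is contained in some $S\setminus\{a\}$, the condition $p\notin\bigcup_{a\in S}\langle S\setminus\{a\}\rangle$ is equivalent to $p$ lying in no proper sub-hull. I will use $c$, $e$, $h$ as the maximum cardinalities of a $C$-, $E$-, $H$-independent set respectively.

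For $e-1\le c$, the plan is to start from a maximum $E$-independent set $S$, with $|S|=e$, and delete its pivot (the cases $e\le 1$ being immediate). By definition there are a pivot $p\in S$ and a witness $p'$ with $p'\in\langle S\setminus\{p\}\rangle$ and $p'\notin\langle S\setminus\{a\}\rangle$ for every $a\in S\setminus\{p\}$. Setting $T=S\setminus\{p\}$, so $|T|=e-1$, I would check that $T$ is $C$-independent with the same witness $p'$: for each $b\in T$ one has $\langle T\setminus\{b\}\rangle=\langle S\setminus\{p,b\}\rangle\subseteq\langle S\setminus\{b\}\rangle$, which misses $p'$, while $p'\in\langle T\rangle=\langle S\setminus\{p\}\rangle$. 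Hence $p'\in\langle T\rangle\setminus\bigcup_{b\in T}\langle T\setminus\{b\}\rangle$, so $c\ge|T|=e-1$. This half is routine and I anticipate no obstacle.

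For $c\le\max\{h,e-1\}$, I would fix a maximum $C$-independent set $F$ with $|F|=c$ and a witness $p\in\langle F\rangle$ satisfying $p\notin\langle F\setminus\{a\}\rangle$ for all $a\in F$, and examine the convex sets $C_a:=\langle F\setminus\{a\}\rangle$. If $\bigcap_{a\in F}C_a=\emptyset$ then $F$ is $H$-independent, so $c=|F|\le h$ and we are done. The whole difficulty is the complementary case where some $q\in\bigcap_{a\in F}C_a$ exists; there the goal is $c\le e-1$, equivalently an $E$-independent set of size $c+1$.

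The key step --- and the one I regard as the crux --- is the correct choice of the extra point. Adjoining the Carath\'eodory witness $p$ fails, because $p\in\langle F\rangle$ forces $\langle F\cup\{p\}\rangle=\langle F\rangle$, so deleting $p$ restores the full hull and no witness can survive. Instead I would adjoin the Helly witness $q$ and take $T=F\cup\{q\}$ with pivot $q$ and witness $p$. First one checks $q\notin F$: if $q\in F$ lay in the intersection, then in particular $q\in\langle F\setminus\{q\}\rangle$, giving $\langle F\setminus\{q\}\rangle=\langle F\rangle\ni p$ and contradicting $p\notin C_q$. Thus $|T|=c+1$, and since $q\in C_a=\langle F\setminus\{a\}\rangle$ we get $\langle(F\setminus\{a\})\cup\{q\}\rangle=\langle F\setminus\{a\}\rangle$ for every $a\in F$; hence $\bigcup_{a\in F}\langle T\setminus\{a\}\rangle=\bigcup_{a\in F}\langle F\setminus\{a\}\rangle$, which omits $p$, while $p\in\langle F\rangle=\langle T\setminus\{q\}\rangle$. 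Therefore $p\in\langle T\setminus\{q\}\rangle\setminus\bigcup_{a\in F}\langle T\setminus\{a\}\rangle$, so $T$ is $E$-independent with pivot $q$, giving $e\ge c+1$ and hence $c\le e-1$. Combining the two cases yields $c\le\max\{h,e-1\}$.
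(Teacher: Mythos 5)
The paper never proves this theorem: it is imported verbatim from van de Vel's book as a known result (hence the citation in the statement) and is used later only as a black box, to derive the exchange-number bound $e_\Delta\le k+2$ from the Carath\'eodory bound $c_\Delta\le k+1$. So there is no internal proof to compare yours against; what can be judged is whether your argument is valid under the paper's definitions, and it is. For $e-1\le c$: deleting the pivot $p$ of a maximum E-independent set $S$ and retaining its witness $p'$ works exactly as you say, since $\langle T\setminus\{b\}\rangle=\langle S\setminus\{p,b\}\rangle\subseteq\langle S\setminus\{b\}\rangle$ avoids $p'$ while $p'\in\langle T\rangle=\langle S\setminus\{p\}\rangle$. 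For $c\le\max\{h,e-1\}$: your dichotomy matches the paper's sign convention (H-independent means $\bigcap_{a\in F}\langle F\setminus\{a\}\rangle=\emptyset$), and in the nonempty case the choice of the Helly point $q$ as the pivot is indeed the crux: $q\notin F$ guarantees $|F\cup\{q\}|=c+1$, and $\langle(F\setminus\{a\})\cup\{q\}\rangle=\langle F\setminus\{a\}\rangle$ for every $a\in F$ while $\langle T\setminus\{q\}\rangle=\langle F\rangle\ni p$, so $T$ is E-independent with pivot $q$ and witness $p$. This is, in substance, the classical Sierksma/van de Vel argument, so you have reconstructed the cited proof rather than found a genuinely different one. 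One small improvement: run the case analysis on an arbitrary finite C-independent set $F$ rather than on a maximum one; concluding $|F|\le\max\{h,e-1\}$ for every such $F$ gives $c\le\max\{h,e-1\}$ even when no maximum C-independent set exists, which matters for the claim ``for all convex structures'' (though not in the paper's finite setting).
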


We organize the rest of the sections as follows. In Section \ref{section-caratheodory}, we establish an upper bound for the Carath\'{e}odory number and determine its exact values for block graphs. In Section \ref{section-exchange}, we address the exchange number in $\Delta$-convexity by providing an upper bound for the exchange number and its exact values for block graphs. Finally in Section \ref{section-product}, we present sharp lower bound for exchange number and Carath\'{e}odory number for $\Delta$-convexity in Cartesian product of graphs and the precise values for the  strong and the lexicographic product of graphs.



	\section{ Carath\'{e}odory number}\label{section-caratheodory}
	
		In this section, first we establish an upper bound for the Carath\'{e}odory number in the context of $\Delta$-convexity and then we provide an example that achieves this bound. Furthermore, we demonstrate that for any natural number $n$, there exist a graph in which the Carath\'{e}odory number is equal to $n$. Finally, we determine the Carath\'{e}odory number for block graphs. We start the with following two observations which follow immediately from the definition of  $\Delta$-convexity.
		\begin{observation}
			If $G$ is a triangle-free graph, then the Carath\'eodory number $c_\Delta(G)=1$.
		\end{observation}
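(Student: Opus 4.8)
The plan is to show that in a triangle-free graph the $\Delta$-interval operator degenerates to the identity, after which the Carath\'eodory computation becomes immediate. First I would observe that since $G$ contains no triangle as a subgraph, no vertex $w \in V(G) \setminus S$ can form a triangle with two vertices of $S$, for any $S \subseteq V(G)$. Hence $[S]_\Delta = S$ for every $S$, so every subset of $V(G)$ is already $\Delta$-convex and the convex hull satisfies $\langle S\rangle = S$ for all $S \subseteq V(G)$.

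With this identification in hand, I would verify the two inequalities defining the Carath\'eodory number. For the lower bound $c_\Delta(G)\geq 1$, observe that a singleton $\{v\}$ is C-independent: since $\langle\{v\}\rangle = \{v\}$ while $\bigcup_{a\in\{v\}}\langle\{v\}\setminus\{a\}\rangle = \langle\emptyset\rangle = \emptyset$, the vertex $v$ lies in $\langle\{v\}\rangle\setminus\bigcup_{a\in\{v\}}\langle\{v\}\setminus\{a\}\rangle$. For the upper bound $c_\Delta(G)\leq 1$, I would take an arbitrary $S$ with $|S|\geq 2$ and show it is C-dependent; indeed $\langle S\rangle = S$ and, since each $x\in S$ belongs to $S\setminus\{a\}$ for any $a\neq x$, we obtain $\bigcup_{a\in S}\langle S\setminus\{a\}\rangle = \bigcup_{a\in S}(S\setminus\{a\}) = S$, so the defining inclusion $\langle S\rangle\subseteq\bigcup_{a\in S}\langle S\setminus\{a\}\rangle$ holds. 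By the prescription defining $c_\Delta$, every set of size exceeding $1$ being C-dependent yields $c_\Delta(G)\leq 1$, and combined with the lower bound we conclude $c_\Delta(G)=1$.

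There is no genuine obstacle here: the entire statement follows directly from the definition of $\Delta$-convexity, the only points requiring minor care being the convention $\langle\emptyset\rangle = \emptyset$ used in the singleton case and the bookkeeping that the union over $a\in S$ recovers all of $S$ precisely when $|S|\geq 2$.
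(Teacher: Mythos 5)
Your proof is correct and follows essentially the same route as the paper: both rest on the observation that triangle-freeness forces $\langle S\rangle = S$ for every $S$, whence any set of size at least $2$ is C-dependent because $\bigcup_{a\in S}\langle S\setminus\{a\}\rangle = S$. The only difference is that you also verify the lower bound $c_\Delta(G)\geq 1$ via the singleton case (using $\langle\emptyset\rangle=\emptyset$), a point the paper leaves implicit.
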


			\begin{observation}
				If $G$ is a complete graph $K_n$ for $n>2$, then $c_\Delta(K_n)=2$.
			\end{observation}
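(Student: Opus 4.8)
The plan is to reduce everything to an explicit description of the $\Delta$-convex hull of subsets of $V(K_n)$, which completeness makes immediate. First I would note that any three distinct vertices of $K_n$ form a triangle. Hence for any $S$ with $|S|\ge 2$ and any $w\in V(K_n)\setminus S$, choosing two vertices $u,v\in S$ shows that $w$ completes a triangle with $u$ and $v$, so $w\in[S]_\Delta$; this forces $[S]_\Delta=V(K_n)$, and therefore $\langle S\rangle=V(K_n)$ for every $S$ with $|S|\ge 2$. For a singleton no triangle can arise, so $\langle\{u\}\rangle=\{u\}$, and $\langle\emptyset\rangle=\emptyset$.

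With the hulls in hand, I would prove the lower bound $c_\Delta(K_n)\ge 2$ by exhibiting a $C$-independent pair. For distinct vertices $u,v$ put $S=\{u,v\}$; then $\langle S\rangle=V(K_n)$, while $\bigcup_{a\in S}\langle S\setminus\{a\}\rangle=\langle\{v\}\rangle\cup\langle\{u\}\rangle=\{u,v\}$. Since $n>2$, there is a vertex $w\notin\{u,v\}$, and this $w$ lies in $\langle S\rangle\setminus\bigcup_{a\in S}\langle S\setminus\{a\}\rangle$, so $S$ is $C$-independent.

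For the matching upper bound I would show that every $S$ with $|S|\ge 3$ is $C$-dependent. Deleting any single element from such an $S$ leaves at least two vertices, so by the hull computation $\langle S\setminus\{a\}\rangle=V(K_n)=\langle S\rangle$ for each $a\in S$. Consequently $\langle S\rangle\subseteq\bigcup_{a\in S}\langle S\setminus\{a\}\rangle$, and $S$ is $C$-dependent. Thus no set of cardinality at least $3$ is $C$-independent, and combined with the lower bound this yields $c_\Delta(K_n)=2$.

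I do not anticipate a genuine obstacle here; the argument is elementary once the hulls are computed. The only places demanding care are the degenerate readings (the hulls of the empty set and of singletons) and the exact role of the hypothesis $n>2$, which is precisely what guarantees a witness vertex outside the chosen pair. For $n=2$ the hull of $\{u,v\}$ already exhausts $V$ but no external witness remains, so the $C$-independence argument for a pair would break down, consistent with the bound being stated for $n>2$.
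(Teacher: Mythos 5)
Your proof is correct and follows essentially the same route as the paper: compute that $\langle S\rangle=V(K_n)$ for every $S$ with $|S|\geq 2$, exhibit a pair as a $C$-independent set, and note that any set of size at least $3$ is $C$-dependent because deleting one element still leaves a hull equal to $V(K_n)$. Your write-up is somewhat more explicit about the hull computation and the role of the hypothesis $n>2$, but the argument is the same.
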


		\begin{theorem}\label{caratheodory_bound}
			If $G$ is a graph with $k$ triangles, then the Carath\'{e}odory number $c_\Delta(G) \leq k+1$.
		\end{theorem}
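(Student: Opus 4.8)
The plan is to bound the cardinality of an arbitrary $C$-independent set $S$ by $k+1$; since $c_\Delta(G)$ is the maximum size of such a set, this suffices. So fix a $C$-independent $S$ together with a witness $p\in\langle S\rangle\setminus\bigcup_{a\in S}\langle S\setminus\{a\}\rangle$. Because $\langle S\rangle$ is obtained by iterating the operator $[\,\cdot\,]_\Delta$, the vertex $p$ can be produced by a finite \emph{derivation}: an ordering $w_1,\dots,w_m=p$ of the vertices of $\langle S\rangle\setminus S$ used, where each $w_i$ is the apex of a triangle $\{w_i,a_i,b_i\}$ of $G$ with $a_i,b_i\in S\cup\{w_1,\dots,w_{i-1}\}$. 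I would encode this as a directed acyclic graph $D$, drawing edges from $a_i$ and $b_i$ into $w_i$, and then restrict $D$ to the ancestors of $p$ (including $p$). In $D$, every generated node has in-degree exactly $2$, and its two parents, being ancestors of $p$, again lie in $S$ together with the generated nodes kept.

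The first key step is to show the number of generated (internal) nodes is at most $k$. I would argue that the triangles $\{w_i,a_i,b_i\}$ attached to distinct generated nodes are distinct triangles of $G$: if the triangle of $w_i$ equalled that of $w_j$ with $w_i\neq w_j$, then $w_i$ would be a base vertex of $w_j$ and $w_j$ a base vertex of $w_i$, forcing both $i<j$ and $j<i$, a contradiction. Hence the internal nodes inject into the set of triangles of $G$, so there are at most $k$ of them. The second key step uses minimality coming from $C$-independence: if some $a\in S$ had no directed path to $p$ in $D$, then $p$ would already lie in $\langle S\setminus\{a\}\rangle$, contradicting the choice of the witness $p$; therefore every vertex of $S$ is a source of the restricted DAG and has out-degree at least $1$.

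The proof then closes by a double count of the edges of the restricted DAG. Writing $s=|S|$ and letting $w$ denote the number of internal nodes (including $p$), summing in-degrees gives $|E(D)|=2w$, while summing out-degrees and using that each of the $s$ sources and each of the $w-1$ internal nodes other than $p$ contributes at least one outgoing edge gives $|E(D)|\geq s+w-1$. Combining, $2w\geq s+w-1$, so $s\leq w+1\leq k+1$, which is the desired bound.

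I expect the main obstacle to be organizing the derivation cleanly so that the two combinatorial facts hold simultaneously: that restricting to ancestors of $p$ keeps the in-degree of every internal node equal to $2$ (i.e.\ both triangle-base vertices are themselves ancestors of $p$), and that the distinct-apex argument genuinely yields distinct triangles rather than merely distinct generation steps. Care is also needed to confirm that, after restriction, every element of $S$ survives as a source—this is exactly where the hypothesis $p\notin\langle S\setminus\{a\}\rangle$ for all $a$ is essential—so the edge count bounds $|S|$ and not merely the number of \emph{used} vertices. Once these points are verified, the double-counting inequality is routine.
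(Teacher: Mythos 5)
Your proof is correct, and it takes a genuinely different route from the paper's. The paper argues at the level of hulls of subsets: it first records that a C-independent set can contain only vertices lying on triangles and never all three vertices of a single triangle, and then, for $|S'|\ge k+2$, a pigeonhole argument produces at least two triangles each containing two vertices of $S'$, after which the paper asserts the covering identity $\langle S'\rangle=\bigcup_{i}\left(\langle S'\setminus\{u_i\}\rangle\cup\langle S'\setminus\{v_i\}\rangle\right)$ to conclude C-dependence. You instead fix a single witness $p$ of C-independence, record one derivation of $p$ as a DAG whose internal nodes have in-degree exactly $2$, prove that the generating triangles of distinct internal nodes are distinct (so there are at most $k$ internal nodes), use the witness property to force every element of $S$ to be an ancestor of $p$, and close with the double count $2w\ge |S|+w-1$, giving $|S|\le w+1\le k+1$; all the verification points you flag (parents of ancestors are ancestors, hence in-degrees survive restriction; apex-swapping forces contradictory orderings; absence of a path from $a$ to $p$ yields $p\in\langle S\setminus\{a\}\rangle$) do indeed hold. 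The gain from your argument is rigor and locality: the paper's covering identity is stated without detailed justification and is its least transparent step, whereas your ancestor and in-degree facts are each routine to verify; moreover you actually establish the sharper bound $|S|\le t+1$, where $t$ is the number of distinct triangles used to derive a single witness, and the argument transfers verbatim to any convexity whose interval operator adds a vertex completing a fixed three-element pattern with two existing vertices. What the paper's route buys is economy and reuse: it needs no auxiliary derivation structure, and its two structural observations about C-independent sets are precisely the ones recycled later in Lemma \ref{cara_properties} and in the block-graph theorems, while your DAG construction is purpose-built for this one bound.
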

		\begin{proof}

			Let $S\subseteq V(G)$ such that $|S|>1$.
			
			If $S$ contains a vertex $v$ not lying on any triangle, then  $\langle S\rangle=\langle S\setminus \{v\}\rangle\cup \{v\}$. Hence, $\langle S\rangle= \langle S\setminus \{v\}\rangle\cup \langle S\setminus \{u\}\rangle$, where $u\neq v$. This implies that $S$ is a C-dependent set. That is, a C-independent set contains only the vertices of triangles in $G$.
			
			If $S$ contains all vertices $\{u,v,w\}$ of a triangle, then $\langle S\rangle=\langle S\setminus \{u\}\rangle=\langle S\setminus \{v\}\rangle=\langle S\setminus \{w\}\rangle$. This implies that $S$ is a C-dependent set. That is, a C-independent set does not contain all the vertices of a triangle in $G$.
			
			Now, we aim to prove that for any $S\subseteq V(G)$ with $|S|>k+1$, it is C-dependent.
			
			Assume that there exists a Carathéodory independent set $S'$ with  $|S'|\geq k+2$. Since there are only $k$ triangles in $G$, $S'$ contains two vertices from some triangles $T_1,T_2,T_3,\ldots,T_j$, where $j\geq 2$. Let $u_i,v_i \in S'$ and $u_i,v_i \in T_i$ for $i\in\{1,2,\ldots,j\}$. Then $\displaystyle  \langle S'\rangle=\bigcup_{i=1}^{j}\left(\langle S'\setminus \{u_i\}\rangle\cup \langle S'\setminus \{v_i\}\rangle\right)$ implies that \( S' \) is C-dependent. Thus, for any \( S\subseteq V(G) \) with \( |S|>k+1 \), it is C-dependent.
			Therefore, the Carathéodory number of \( G \), \( c_\Delta\leq k+1 \).
		\end{proof}

If $k = 1$, then $G$ contains only one triangle. In this case, the Carath\'{e}odory number, $ c_\Delta(G) = 2$.

			\begin{figure}[H]
                \centering
			\epsfxsize=15truecm \slika{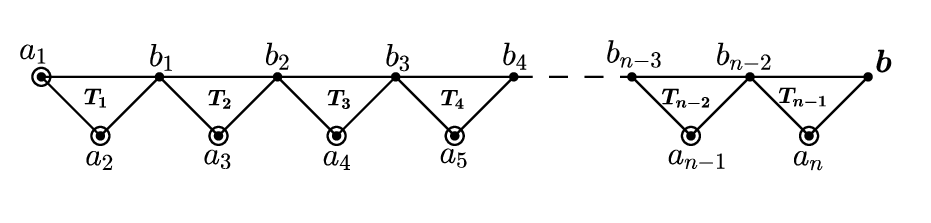}\vspace{-0.4in}
			\caption{Graph $G$ with with $c_\Delta(G)=e_\Delta(G)=n$.}
			\label{delta}
		\end{figure}
		\begin{proposition}
			For any natural number $n$, there exists a graph $G$ with $c_\Delta(G)=n$.
		\end{proposition}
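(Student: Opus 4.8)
The plan is to exhibit, for each natural number $n$, a graph built from a chain of $n-1$ triangles glued together in a snake-like fashion, and to show that its natural ``spine'' of vertices forms a $C$-independent set of size exactly $n$. Concretely, for $n \geq 2$ I would take vertices $a_1, a_2, \dots, a_n$ together with auxiliary vertices $b_1, \dots, b_{n-1}$, and let the triangles be $T_1 = \{a_1, a_2, b_1\}$ and $T_j = \{b_{j-1}, a_{j+1}, b_j\}$ for $2 \le j \le n-1$, adding no edges other than the ones forced by these triangles (this is precisely the graph depicted in Figure~\ref{delta}). The degenerate case $n = 1$ is handled separately by taking any triangle-free graph and invoking the observation above that $c_\Delta = 1$ for triangle-free graphs.

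For the lower bound $c_\Delta(G) \ge n$, I would set $S = \{a_1, \dots, a_n\}$ and propose $p = b_{n-1}$ as the witness point. First I would observe that the whole chain is generated from $S$: the pair $a_1, a_2$ completes $T_1$ to produce $b_1$, then $b_1, a_3$ completes $T_2$ to produce $b_2$, and so on, so that $p = b_{n-1} \in \langle S\rangle$. Then I would show that deleting any single $a_i$ destroys this generation, so that $p \notin \langle S\setminus\{a_i\}\rangle$ for every $i$; this is exactly the statement that $p \in \langle S\rangle \setminus \bigcup_{a\in S}\langle S\setminus\{a\}\rangle$, i.e.\ that $S$ is $C$-independent.

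The key (and only delicate) step is proving that no $a_i$ can be bypassed, i.e.\ that the generation order is forced. For this I would run a ``first-created'' analysis: since each $b_j$ lies only in the triangles $T_j$ and $T_{j+1}$, adding $b_j$ to a hull requires two already-present mutually adjacent neighbours of $b_j$, and for every $j \geq 2$ this forces a neighbouring auxiliary vertex $b_{j-1}$ or $b_{j+1}$ to be present beforehand. Hence the first auxiliary vertex that can enter the hull process is necessarily $b_1$, the unique auxiliary vertex sitting in a triangle (namely $T_1$) whose other two vertices lie in $S$; this already requires $a_1, a_2 \in S$. An induction on $j$ then shows that, for $T \subseteq S$, one has $b_j \in \langle T\rangle$ if and only if $\{a_1, \dots, a_{j+1}\} \subseteq T$. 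Taking $j = n-1$ yields that $p$ requires all of $a_1, \dots, a_n$, which completes the lower bound. I expect the main thing needing care to be the bookkeeping that verifies the only triangles of $G$ are $T_1, \dots, T_{n-1}$, so that no unintended triangle provides a shortcut around some $a_i$.

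Finally, for the matching upper bound I would simply count triangles: the construction contains exactly $k = n-1$ triangles, so Theorem~\ref{caratheodory_bound} gives $c_\Delta(G) \le k+1 = n$. Combining this with the lower bound yields $c_\Delta(G) = n$, as required.
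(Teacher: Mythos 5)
Your proposal is correct and follows essentially the same route as the paper: the same chain-of-triangles graph of Figure~\ref{delta}, the same spine $S=\{a_1,\dots,a_n\}$ with the terminal vertex ($b_{n-1}$ in your notation, $b$ in the paper's) as the Carath\'eodory witness, and the same upper bound via Theorem~\ref{caratheodory_bound} on the $n-1$ triangles. Your ``first-created'' induction merely makes explicit the hull computations that the paper asserts directly (and you fold $n=2$ into the construction where the paper uses a complete graph), so the arguments coincide in substance.
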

		
		\begin{proof}
		For $n=1$, consider any triangle-free graph $G$. In this case, $c_\Delta(G)=1$.\\
		For $n=2$, consider any complete graph $G$. Here, $c_\Delta(G)=2$.\\
        For $n\geq 3$, consider the graph $G$ depicted in Figure \ref{delta}. Let $S=\{a_1,a_2,a_3,\ldots,a_n\}$. Then $\langle S\rangle=V(G)$, $\langle S\setminus\{a_1\}\rangle=\{a_2,a_3,a_4,\ldots, a_n\}$, $\langle S\setminus\{a_2\}\rangle=\{a_1,a_3,a_4,\ldots ,a_n\}$, $\langle S\setminus\{a_i\}\rangle=\{a_1,a_2,b_1,a_3,b_2,a_4,\ldots ,a_{i-1},b_{i-2},a_{i+1},a_{i+2},\ldots ,a_n\}$ for $i\in\{3,4,5,\ldots ,n-1\}$ and $\langle S\setminus\{a_n\}\rangle=\{a_1,a_2,b_1,a_3,b_2,a_4,\ldots,b_{n-3},a_{n-1},b_{n-2}\}$.\\ However, $\displaystyle b\notin\bigcup_{a_i\in S} \langle S\setminus\{a_i\}\rangle$. Therefore, $S$ is a Carathéodory independent set, and $|S|=n$. Since there are $n-1$ triangles in $G$, by Theorem \ref{caratheodory_bound}, $c_\Delta(G)=n$.
		\end{proof}
\begin{remark}
	There exists a graph $G$ with $k$ triangles that have Carathéodory number of $k+1$. For example, in Figure \ref{delta}, $G$ is a graph with $n-1$ triangles and the Carathéodory number is $n$.
\end{remark}


In the next result, we present some properties of the Carath\'{e}odory  independent set of a general graph, which will be used in subsequent results.

\begin{lemma}\label{cara_properties}
If $S$ is a  Carath\'{e}odory independent set of $G$ and $|S|\geq2$, then $S$ has the following properties.
	\begin{enumerate}

            \item[(i)] the elements of $S$ are part of some triangles in $G$.
		\item[(ii)] the induced subgraph of $S$ contains at least one edge.
            \item[(iii)] for any $u,v\in S$, $\langle S\setminus\{u,v\}\rangle \cap \langle u,v  \rangle=\emptyset$.
  	
	\end{enumerate}
\end{lemma}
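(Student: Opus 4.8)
The plan is to prove each part by contraposition: in each case I would show that the failure of the stated property forces $S$ to be Carath\'eodory dependent, i.e. $\langle S\rangle\subseteq\bigcup_{a\in S}\langle S\setminus\{a\}\rangle$, contradicting the assumed C-independence.

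For (i), suppose some $v\in S$ lies on no triangle of $G$. Then $v$ can never be the apex of a triangle, nor one of the two adjacent base vertices that generate a new vertex, so $v$ is inert throughout the hull computation and $\langle S\rangle=\langle S\setminus\{v\}\rangle\cup\{v\}$. Choosing any $u\in S\setminus\{v\}$ we have $v\in\langle S\setminus\{u\}\rangle$, hence $\langle S\rangle\subseteq\langle S\setminus\{v\}\rangle\cup\langle S\setminus\{u\}\rangle$ and $S$ is C-dependent. This is exactly the first reduction already carried out in the proof of Theorem~\ref{caratheodory_bound}. For (ii), suppose the subgraph induced on $S$ has no edge. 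Then no two vertices of $S$ are adjacent, so no apex can ever be added and $\langle S\rangle=S$; the same holds for every subset, so $\langle S\setminus\{a\}\rangle=S\setminus\{a\}$. Since $|S|\geq2$ we get $\bigcup_{a\in S}(S\setminus\{a\})=S=\langle S\rangle$, so again $S$ is C-dependent, and a C-independent set must induce at least one edge.

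For (iii) I would again argue by contraposition: assume there exist $u,v\in S$ and a vertex $p\in\langle S\setminus\{u,v\}\rangle\cap\langle u,v\rangle$, write $A=S\setminus\{u,v\}$ (so $p\in\langle A\rangle$ and $p\in\langle u,v\rangle$), and try to deduce that $S$ is C-dependent. The natural target is the identity
\[
\langle S\rangle=\langle S\setminus\{u\}\rangle\cup\langle S\setminus\{v\}\rangle,
\]
whose right-hand side is contained in $\bigcup_{a\in S}\langle S\setminus\{a\}\rangle$. The inclusion $\supseteq$ is immediate from monotonicity of the hull. For $\subseteq$ I would generate $\langle S\rangle$ from $S=A\cup\{u,v\}$ one triangle-apex at a time: first all of $\langle A\rangle$ is produced, and since $A\subseteq S\setminus\{u\}$ and $A\subseteq S\setminus\{v\}$ this already lies in $\langle S\setminus\{u\}\rangle\cap\langle S\setminus\{v\}\rangle$, after which one checks that every newly added apex stays in $\langle S\setminus\{u\}\rangle\cup\langle S\setminus\{v\}\rangle$. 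A clean sub-case clarifies the role of $p$: if $p\in S$, then $p\in A$ and $\langle u,v\rangle\subseteq\langle S\setminus\{p\}\rangle$ regenerates $p$, so $\langle S\setminus\{p\}\rangle=\langle S\rangle$ and $S$ is trivially C-dependent. Likewise, if $u\in\langle S\setminus\{u\}\rangle$ or $v\in\langle S\setminus\{v\}\rangle$ then one deletion-hull is already all of $\langle S\rangle$ and we are done, so we may assume $u\notin\langle S\setminus\{u\}\rangle$ and $v\notin\langle S\setminus\{v\}\rangle$.

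The hard part is the remaining generation step, where a new apex $w$ is produced from two adjacent vertices $x,y$ with $x\in\langle S\setminus\{v\}\rangle\setminus\langle S\setminus\{u\}\rangle$ and $y\in\langle S\setminus\{u\}\rangle\setminus\langle S\setminus\{v\}\rangle$: a triangle that \emph{bridges} the two deletion-hulls. Neither hull alone then forces $w$ into their union, and indeed for a bare pair the naive analogue $\langle u,v\rangle\subseteq\langle u,p\rangle\cup\langle v,p\rangle$ can genuinely fail, so the weaker hypothesis $p\in\langle u,v\rangle$ is not enough. This is exactly where the full strength $p\in\langle A\rangle$ — that $p$ is generated from $A$ alone — must enter: I expect the resolution to show that the generation of $p$ from $A$ makes the shared hull $\langle A\rangle$ rich enough that any such bridging apex $w$ is still captured by deleting a suitable single vertex of $A$, so that $w\in\langle S\setminus\{a\}\rangle$ for some $a\in A$ and the covering $\langle S\rangle\subseteq\bigcup_{a\in S}\langle S\setminus\{a\}\rangle$ survives. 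Equivalently, one must show that the witness guaranteed by C-independence cannot escape every single-vertex deletion once $p\in\langle A\rangle\cap\langle u,v\rangle$; handling this bridging configuration is the crux of the argument and the step I expect to require the most care.
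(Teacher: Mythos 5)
Parts (i) and (ii) of your proposal are correct and essentially the same as the paper's argument (contraposition: a vertex on no triangle, or an edgeless $S$, forces C-dependence). The genuine gap is in part (iii). You set up the contrapositive, aim at the covering identity $\langle S\rangle=\langle S\setminus\{u\}\rangle\cup\langle S\setminus\{v\}\rangle$, dispose of the easy sub-cases (the witness $p$ lying in $S$, or $u\in\langle S\setminus\{u\}\rangle$), and then explicitly leave the central step unresolved: the ``bridging apex'' configuration, where a new vertex is generated by a triangle with one base vertex in $\langle S\setminus\{v\}\rangle\setminus\langle S\setminus\{u\}\rangle$ and the other in $\langle S\setminus\{u\}\rangle\setminus\langle S\setminus\{v\}\rangle$. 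You say you ``expect'' the hypothesis $p\in\langle A\rangle$ to rescue this case, but you never show it; the one step you yourself identify as the crux is exactly the step that is missing, so (iii) is not proved.

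The paper does not go through any covering identity or apex-generation analysis at all; it proves emptiness of the intersection directly from two structural facts. First, it claims that a C-independent $S$ induces only one edge, so $S\setminus\{u,v\}$ is edgeless and hence already $\Delta$-convex, giving $\langle S\setminus\{u,v\}\rangle=S\setminus\{u,v\}$ (note the paper attributes this to (ii), which only gives \emph{at least} one edge, so this step is itself justified only loosely). Second, C-independence forces $w\notin\langle S\setminus\{w\}\rangle$ for every $w\in S$, since otherwise $\langle S\setminus\{w\}\rangle=\langle S\rangle$ and $S$ would be C-dependent. Combining these, any element of $\langle S\setminus\{u,v\}\rangle\cap\langle u,v\rangle$ would be a vertex $w\in S\setminus\{u,v\}$ with $w\in\langle u,v\rangle\subseteq\langle S\setminus\{w\}\rangle$, a contradiction, and $u,v$ themselves are excluded because $u\notin\langle S\setminus\{u\}\rangle\supseteq\langle S\setminus\{u,v\}\rangle$. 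If you want to repair your route, this is the missing idea: once $S\setminus\{u,v\}$ is known to be convex, the intersection can only contain vertices of $S$, and the ``no vertex lies in the hull of the others'' consequence of C-independence finishes the proof with no need to track how apexes propagate between the two deletion-hulls.
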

\begin{proof} 

   \begin{enumerate}

   \item[(i)] If $S$ contains a vertex $v$ not lying on any triangle, then $ \langle S\rangle=\langle S \setminus \{v\}\rangle\cup \{v\}$. Hence, $\langle S\rangle= \langle S \setminus  \{v\}\rangle\cup \langle S \setminus  \{u\}\rangle$, where $u\neq v$. This implies that $S$ is a Carath\'eodory dependent set.
		
  \item[(ii)] Let $S$ be a $C$-independent set. Assume that the induced subgraph of $S$ does not contain any edges; that is, $S$ contains pairwise non-adjacent vertices. Hence, $\langle S\rangle = S$, and for any $u\in S$, we have  $\langle S\setminus \{u\} \rangle = S\setminus \{u\}$. This implies that $S$ is a $C$-dependent set, a contradiction. Therefore, the induced subgraph of  $S$ must contain at least one edge.

  \item[(iii)] Let $u,v\in S$ and assume that $uv \in E(G)$. By property (ii), since $S$ contains only one edge, the induced subgraph of $S\setminus\{u,v\}$ will not contain an edge. Therefore, $\langle S\setminus\{u,v\}\rangle = S\setminus\{u,v\}$.
  Now, since $uv \in E(G)$ and by property (i) there exists at least one vertex $w\in V(G) \setminus S, w \neq u,v$ such that $w \in \langle u,v  \rangle$. Without loss of generality, we assume that $\langle S\setminus\{u,v\}\rangle \cap \langle u,v  \rangle \neq\emptyset$ and let $ w \in \langle S\setminus\{u,v\}\rangle \cap \langle u,v  \rangle $. But this is not possible, since $w\in V(G) \setminus S, w \neq u,v $ we have $ w \notin \langle S\setminus\{u,v\}\rangle = S\setminus\{u,v\}$.

  It remains to prove it for $u,v\in S$ where $uv \notin E(G)$. Since $uv \notin E(G)$ we have $\langle u,v  \rangle = \{u,v\}$. If $\langle S\setminus\{u,v\}\rangle \cap \langle u,v  \rangle \neq\emptyset$ implies that either $ u \in \langle S\setminus\{u,v\}\rangle$ or $ v \in \langle S\setminus\{u,v\}\rangle$. However, both conditions cannot be true because if $S$ a Carath\'{e}odory independent set then for any vertex $u\in S$, $u\notin \langle S\setminus\{u\}\rangle$.

   \end{enumerate}
\end{proof}

To prove that the Carath\'{e}odory number of a 2-connected chordal graph is two, we need the following result.
\begin{lemma}\cite{bsaamc}\label{hull_2_chordal}
	If $G$ is a 2-connected chordal graph, then every pair of adjacent vertices form a hull set of $G$.
\end{lemma}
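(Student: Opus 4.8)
The statement asserts that for every edge $uv$ of a $2$-connected chordal graph $G$ one has $\langle\{u,v\}\rangle=V(G)$, where $\langle\cdot\rangle$ denotes the $\Delta$-hull and $w\in[S]_\Delta$ exactly when $w$ is adjacent to two adjacent vertices of $S$. The plan is to argue by induction on $|V(G)|$, deleting a simplicial vertex at each step. The base case is $K_3$, the only $2$-connected chordal graph on three vertices, where the hull of any edge immediately absorbs the third vertex. For the inductive step I would invoke the classical fact that every chordal graph has a simplicial vertex $x$, that is, one whose neighbourhood $N(x)$ is a clique; since $G$ is $2$-connected we have $\deg(x)\ge2$, and since induced subgraphs of chordal graphs are chordal, $G-x$ is again chordal.

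The heart of the argument, and the step I expect to be the main obstacle, is the claim that for $|V(G)|\ge4$ the graph $G-x$ remains $2$-connected (it is automatically connected, as $G$ is $2$-connected). Suppose instead that $G-x$ had a cut vertex $y$, so that $G-\{x,y\}$ is disconnected. Because $N(x)$ is a clique, two neighbours of $x$ lying in different components of $G-\{x,y\}$ would have to be adjacent, which is impossible; hence all neighbours of $x$ other than $y$ lie in a single component $C$. Any other component $C'$ of $G-\{x,y\}$ then has every edge to the rest of $G$ ending at $y$ (it has no edge to $C$ and contains no neighbour of $x$), so $G-y$ separates $C'$ from the remainder, contradicting the $2$-connectivity of $G$. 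Therefore $G-x$ is $2$-connected and the induction hypothesis applies to it.

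With this preservation result in hand the inductive step is short. First suppose the edge $uv$ avoids $x$; then it is an edge of $G-x$, so by induction $\langle\{u,v\}\rangle_{G-x}=V(G)\setminus\{x\}$. Since every triangle of $G-x$ is a triangle of $G$, the hull can only grow when computed in $G$, giving $V(G)\setminus\{x\}\subseteq\langle\{u,v\}\rangle_G$; choosing two neighbours $p,q\in N(x)$, the triangle $\{x,p,q\}$ forces $x$ into the hull, whence $\langle\{u,v\}\rangle_G=V(G)$. Second, if $x$ is an endpoint, say the edge is $xu$, I would pick another neighbour $w\in N(x)$ (available since $\deg(x)\ge2$); then $uw\in E(G)$ because $N(x)$ is a clique, and $w\in[x,u]_\Delta$, so $\langle\{x,u\}\rangle_G$ already contains the $x$-free edge $uw$, which by the previous case has full hull. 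Monotonicity of $\langle\cdot\rangle$ then yields $\langle\{x,u\}\rangle_G=V(G)$, completing the induction.
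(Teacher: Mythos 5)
Your proof is correct, and it is worth pointing out that the paper itself contains no proof of this lemma to compare against: the statement is imported from \cite{bsaamc} as a black box, so your argument supplies something the present paper only cites. Your induction is sound at every step. The pivotal claim, that deleting a simplicial vertex $x$ from a $2$-connected chordal graph on at least four vertices leaves a $2$-connected graph, is proved correctly: if $y$ were a cut vertex of $G-x$, the clique $N(x)\setminus\{y\}$ (nonempty since $\deg(x)\ge 2$) would lie inside a single component $C$ of $G-\{x,y\}$, and any other component $C'$ would then meet the rest of $G$ only through $y$, making $y$ a cut vertex of $G$ itself, a contradiction. The monotonicity you invoke, $\langle S\rangle_{G-x}\subseteq\langle S\rangle_{G}$ for $S\subseteq V(G)\setminus\{x\}$, is legitimate because the $\Delta$-hull is obtained by iterating the triangle-interval operator and every triangle of the induced subgraph $G-x$ is a triangle of $G$. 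Re-absorbing $x$ via a triangle $\{x,p,q\}$ with $p,q\in N(x)$ adjacent (simpliciality again), and reducing the case of an edge $xu$ to the $x$-free case through the triangle $\{x,u,w\}$ with $w\in N(x)\setminus\{u\}$ followed by monotonicity of the hull, are both valid. The net effect of your approach is a short, self-contained, purely graph-theoretic proof by simplicial elimination, which makes the paper independent of the external reference for this ingredient; nothing in it needs repair.
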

\begin{theorem}
 If $G$ is a 2-connected chordal graph, then $c_\Delta(G)=2$.   
\end{theorem}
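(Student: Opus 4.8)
The plan is to prove the two inequalities $c_\Delta(G) \geq 2$ and $c_\Delta(G) \leq 2$ separately, with the upper bound carried almost entirely by Lemma~\ref{hull_2_chordal}.

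For the lower bound, I would first note that a $2$-connected graph contains a cycle and, by chordality, a shortest such cycle must be a triangle, so $|V(G)| \geq 3$ and $G$ has an edge $uv$. By Lemma~\ref{hull_2_chordal} we then have $\langle \{u,v\}\rangle = V(G) \supsetneq \{u,v\}$. Taking $S = \{u,v\}$ and using that a single vertex is convex, $\langle S\setminus\{u\}\rangle = \{v\}$ and $\langle S\setminus\{v\}\rangle = \{u\}$, so $\bigcup_{a\in S}\langle S\setminus\{a\}\rangle = \{u,v\}$. Any $w \in V(G)\setminus\{u,v\}$ then lies in $\langle S\rangle \setminus \bigcup_{a\in S}\langle S\setminus\{a\}\rangle$, witnessing that $\{u,v\}$ is $C$-independent and giving $c_\Delta(G) \geq 2$.

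For the upper bound, I would suppose toward a contradiction that there is a $C$-independent set $S$ with $|S| \geq 3$. By Lemma~\ref{cara_properties}(ii) the subgraph induced on $S$ contains an edge, say $uv$ with $u,v \in S$, and since $|S| \geq 3$ there is a vertex $a \in S\setminus\{u,v\}$ to delete. Then $\{u,v\}\subseteq S\setminus\{a\}$, so by monotonicity of the hull and Lemma~\ref{hull_2_chordal}, $\langle S\setminus\{a\}\rangle \supseteq \langle \{u,v\}\rangle = V(G)$, whence $\langle S\setminus\{a\}\rangle = V(G) \supseteq \langle S\rangle$. Therefore $\langle S\rangle \subseteq \langle S\setminus\{a\}\rangle \subseteq \bigcup_{b\in S}\langle S\setminus\{b\}\rangle$, so $S$ is $C$-dependent, contradicting its choice. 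Hence every $C$-independent set has at most two elements, $c_\Delta(G)\leq 2$, and combined with the lower bound this yields $c_\Delta(G)=2$.

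The main obstacle is essentially recognizing that Lemma~\ref{hull_2_chordal} collapses the whole upper-bound argument: once a single edge contained in $S$ is known to generate all of $V(G)$, deleting any third vertex of $S$ cannot shrink the hull, so $S$ is automatically redundant. The only points needing care are guaranteeing that $S$ really contains an edge, which is exactly Lemma~\ref{cara_properties}(ii), and that a third vertex $a$ is available for deletion, which follows from $|S|\geq 3$.
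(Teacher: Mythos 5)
Your proposal is correct and follows essentially the same route as the paper: both use Lemma~\ref{hull_2_chordal} to show an adjacent pair $\{u,v\}$ is $C$-independent (giving $c_\Delta(G)\geq 2$), and both use Lemma~\ref{cara_properties}(ii) together with Lemma~\ref{hull_2_chordal} to show any set of size at least $3$ is $C$-dependent, your contradiction framing being just the contrapositive of the paper's two-case analysis. The only (harmless) addition is your explicit justification that $|V(G)|\geq 3$, a point the paper leaves implicit in the definition of $2$-connectedness.
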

\begin{proof}
	Let $ S=\{u,v\}\subseteq  V(G)$ with $uv\in E(G)$. By Lemma \ref{hull_2_chordal},  we have $\langle S \rangle = V(G)$. Since $\langle S \rangle \not\subseteq \langle S \setminus u \rangle  \bigcup \langle S \setminus v \rangle$,  $S$ is C-independent. Now, we need to prove that any subset $S'\subseteq V(G)$ with $|S'|\geq 3$ is C-dependent. There are two cases to consider: 
	\begin{description}
		\item[Case 1:] $S'$ consists of pairwise non-adjacent vertices. By Lemma \ref{cara_properties} (ii), $S'$ is C-dependent.
		\item[Case 2:] $S'$ contains at least two vertices, say $u, v$, such that $uv\in E(G)$. By Lemma \ref{hull_2_chordal},  $\langle u,v \rangle = V(G)$. Consequently, for any $w\in S'\setminus\{u,v\}$, we have $\langle S'\setminus \{w\}\rangle= V(G)$. This implies $S'$ is C-dependent.	
	\end{description}
Since every subset of size at least 3 is C-dependent, we conclude that $c_\Delta(G) = 2 $, completing the proof.
\end{proof}

We conclude this section by presenting the exact value for the Carath\'{e}odory number of  block graphs. For the following result, we define a block of a graph $G$ as a maximal 2-connected subgraph.

\begin{theorem}
Let $G$ be a block graph with $\ell$ number of blocks.
\begin{itemize}
\item[(i)]  If $G$ contains no blocks isomorphic to the complete graph $K_2$ and all blocks lie in a single chain, then $c_{\Delta}(G)=\ell+1$.
\item[(ii)]  If $G$ contains no blocks isomorphic to the complete graph $K_2$, and the blocks are on different chains with $k$ being the number of blocks in the longest chain, then $c_{\Delta}(G)=k+1$.
\item[(iii)]  If $G$ contains blocks isomorphic to the complete graph $K_2$ and $k$ is the maximum number of consecutive blocks which does not contains $K_2$ as a block among the chains of blocks in $G$, then $c_{\Delta}(G)=k+1$.
\end{itemize}
\end{theorem}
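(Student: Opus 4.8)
The plan is to reduce the computation of $\langle S\rangle$ in a block graph to a bootstrap-percolation process on the block-cut tree, and then read off both bounds from it. First I would record the governing principle: since every triangle of a block graph lies inside a single complete block, a vertex $w$ enters the hull exactly when two vertices of $w$'s block are already present; equivalently, a complete block of order $\ge 3$ becomes entirely contained in the hull as soon as it holds two hull-vertices, and the cut-vertices it thereby acquires may in turn trigger neighbouring blocks. A $K_2$ block contains no triangle, so it neither absorbs new vertices nor transmits activation across its cut-vertices. Thus $\langle S\rangle$ is the union of the ``activated'' blocks together with the isolated seeds, where activation spreads through the block-cut tree only along complete blocks of order $\ge 3$. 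I would prove this as a short preliminary lemma. Combined with Lemma~\ref{cara_properties}, it supplies the facts I need about a $C$-independent set $S$ with $|S|\ge 2$: its vertices lie on triangles, it meets some block in two vertices, and, since three vertices of a complete block form a triangle (which is $C$-dependent by the argument of Theorem~\ref{caratheodory_bound}), it meets every block in at most two vertices.

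For the lower bounds I would exhibit explicit $C$-independent sets. In case (i), writing the chain as $B_1-\cdots-B_\ell$ with successive cut-vertices, place two non-cut seeds in the end block $B_1$ and one non-cut seed in each later block $B_2,\dots,B_\ell$, and let $p$ be a non-cut vertex of $B_\ell$. Percolation then activates $B_1$, hence $B_2$, and so on up to $B_\ell$, so $p\in\langle S\rangle=V(G)$; but deleting any single seed stalls the percolation before it reaches $B_\ell$, so $p\notin\langle S\setminus\{a\}\rangle$ for every $a\in S$. Hence $S$ is $C$-independent with $|S|=\ell+1$. Case (ii) is the same construction carried out inside a longest chain of $k$ blocks, the side branches receiving no seed and remaining inactive; this yields a $C$-independent set of size $k+1$. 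Case (iii) applies the same idea to a longest run of $k$ consecutive non-$K_2$ blocks, the bounding $K_2$ blocks preventing any further spread, again giving $k+1$.

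For the upper bounds I would argue by criticality. Let $S$ be $C$-independent with witness $p\in\langle S\rangle\setminus\bigcup_{a\in S}\langle S\setminus\{a\}\rangle$; then $p\notin S$ and $p$ lies in some activated block $B^\ast$, and for every $a\in S$ deleting $a$ must de-activate every block containing $p$. Rooting the activation at $B^\ast$, each activated block is brought in by its \emph{contributors}, namely its native seeds together with its already-activated neighbours; criticality forces each activated block to use exactly two contributors, since a third would make some seed redundant. Counting seeds against contributors then gives precisely $t+1$ seeds, where $t$ is the number of activated blocks. It remains to bound $t$ by the appropriate chain-length parameter: because $K_2$ blocks neither absorb nor transmit, the activated blocks form a connected family inside the complete-block part, which I would bound by $\ell$ in (i) and by the longest run of consecutive non-$K_2$ blocks in (iii), delivering $|S|=t+1\le(\text{parameter})+1$.

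The hard part is the final step in case (ii): bounding $t$ by the longest-chain length $k$. The contributor count pins the number of seeds to $t+1$, but a priori the activated blocks form a \emph{subtree} of the block-cut tree, and at a cut-block incident with three or more blocks the activation may branch, so that $t$ could a priori exceed the length of any single chain. The crux is therefore to show, using that all seeds must be critical for the single witness $p$, that a branching activation pattern cannot be sustained, i.e. that the blocks essential to $p$ lie along one chain through $B^\ast$. Reducing such a branching percolation to a linear one is the delicate point on which the whole upper bound for (ii) turns, and is where I would concentrate the argument and scrutinise the statement most carefully.
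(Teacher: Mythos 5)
There is a genuine gap, and you have named it yourself: the upper bound in case (ii) is never proved, only identified as ``the delicate point.'' Your contributor-counting argument, even if made rigorous, yields $|S|=t+1$ where $t$ is the number of activated blocks, and activated blocks form a subtree of the block-cut tree that may branch; so without ruling out branching you cannot conclude $t\le k$, and the theorem remains unproved exactly where it is hardest. Worse, the same problem infects your case (iii), which you do not flag: a connected family of non-$K_2$ blocks need not lie in a single run of consecutive blocks (three triangles sharing one cut vertex give a connected family of $3$ non-$K_2$ blocks while $k=2$), so ``bound $t$ by the longest run'' is not a valid step either. A proposal that defers the crux is not a proof of it.

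The missing idea, which is how the paper closes both cases, is not that branching activation cannot occur, but that a \emph{branching seed set cannot be critical for a single witness}. If $S'$ meets two distinct chains (or branches) $C_1$ and $C_2$, pick $u\in S'\cap V(C_1)$ and $w\in S'\cap V(C_2)$ lying in only one of them: deleting $u$ destroys only hull elements inside $\langle S'\rangle\cap V(C_1)$, and deleting $w$ destroys only hull elements inside $\langle S'\rangle\cap V(C_2)$, so
$\langle S'\setminus\{u\}\rangle\cup\langle S'\setminus\{w\}\rangle=\langle S'\rangle$
and $S'$ is C-dependent (the paper's Case 1 of (ii); Case 2 and the three-set union in (iii) are the multi-branch and $K_2$-separated analogues). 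In the three-triangle example above this is exactly what happens: each single deletion loses only vertices of its own branch, and the losses cover one another. Your percolation framing and your lower-bound constructions (two seeds in an end block, one seed per subsequent block, witness in the last block) coincide with the paper's, so what your write-up lacks is precisely this ``deficiency confinement'' argument; with it, the reduction from branching to linear activation that you were seeking becomes unnecessary, because branching configurations are simply dependent.
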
	 

\begin{proof}
\begin{itemize}
\item[(i)] Let $G$ be a block graph with $\ell$ blocks, where no blocks are isomorphic to $K_2$ and all blocks lie in a single chain. Let $B_1, B_2, B_3, \ldots, B_{\ell}$ be the blocks of $G$ with $B_1$ and $B_{\ell}$ being the pendant blocks and let $S$ be a C-independent set of $G$. By Lemma~\ref{cara_properties}, $S$ contains two vertices from at least one block and not more than two vertices from a single block. Without loss of generality we may assume that $S$ contains two vertices from $B_1$ and one vertex from all other blocks. Let $S=\left\{u_1, u_1{ }^{\prime}, u_2, u_3, \ldots, u_{\ell}\right\}$, where $u_1, u_1{ }^{\prime} \in B_1, u_i \in B_i$ for $i\in\{2,3, \ldots, \ell\}$, and there are no three vertices in $S$ forming $K_3$ in $G$ and the vertices are not cut vertices of $G$. By construction, $\langle S\rangle=V(G)$. Also, $\langle S\setminus \{u_1\}\rangle=S \setminus\{ u_1\}$,
$\langle S \setminus\{u_1^{\prime}\}\rangle=S \setminus \{u_1^{\prime}\}, \langle S\setminus \{u_i\}\rangle=V(B_1)\cup \cdots \cup V(B_{i-1}) \cup\{u_{i+1}, u_{i+2}, \ldots, u_{\ell}\}$ for $i\in\{2,3, \ldots, \ell\}$. This implies that there exists at least one vertex $v \in V\left(B_{\ell}\right)$ such that $v \in\langle S\rangle \setminus \bigcup_{a \in S}\langle S \setminus\{a\}\rangle$. Thus, $S$ is a C-independent set of size $\ell+1$ elements.

Now, we need to prove that there is no C-independent set of size greater than $\ell+1$. Suppose there is a C-independent set $S^{\prime}$ with $\left|S^{\prime}\right| \geq \ell+2$. By Lemma \ref{cara_properties}, $S^{\prime}$ contains two vertices from at least one block and not more than two vertices from a single block. Consider the possible cases:
\begin{description}

	\item[Case 1:] $S^{\prime}$ contains at least one vertex from each block.
    
    Since $\left|S^{\prime}\right| \geq \ell+2, S^{\prime}$ contains at least two vertices each from two blocks, say $B_i$ and $B_j$ with $i, j \in\{1,2,3, \ldots, \ell\}$. Let $u, u^{\prime}, v, v^{\prime} \in S^{\prime}$, where $u, u^{\prime} \in V(B_i)$ and $v, v^{\prime} \in V(B_j)$. Then $\langle S \setminus\{u\}\rangle=\langle S \setminus\{v\}\rangle=V(G)$, making $S^{\prime}$ a C-dependent set.
     \item[Case 2:] $S^{\prime}$ does not contain vertices from some blocks.
     
     That is, $S^{\prime}$ contains two vertices from at least three blocks. There exists a sequence of blocks $B_i, B_{i+1}, \ldots, B_j$ such that $S^{\prime}$ contains at least one vertex from each of these blocks and two vertices from at least two blocks, say $B_x$ and $B_y$ with $x, y \in\{i, i+1, \ldots, j\}$. Let $u, u^{\prime}, v, v^{\prime} \in S^{\prime}$, where $u, u^{\prime} \in V(B_x)$ and $v, v^{\prime} \in V(B_y)$. Then $\langle S \setminus\{u\}\rangle=\langle S \setminus\{v\}\rangle=\left\langle S^{\prime}\right\rangle$, making $S^{\prime}$ a C-dependent set.
\end{description}
Thus, there is no C-independent set of size greater than $\ell+1$. Hence, the Carathéodory number $c_{\Delta}(G)=\ell+1$.

\item[(ii)] Let $G$ be a block graph containing no blocks isomorphic to the complete graph $K_2$, and let the blocks be on different chains with $k$ being the number of blocks in the longest chain. Then, from (i), there is a C-independent set of size $k+1$ in the longest chain, denoted as $S$. Now, we need to prove that there is no C-independent set of size greater than $k+1$. Suppose there is a C-independent set $S^{\prime}$ with $\left|S^{\prime}\right| \geq k+2$. By Lemma \ref{cara_properties}, $S^{\prime}$ contains two vertices from at least one block and not more than two vertices from a single block. From (i), it is not possible to get a C-independent set of size greater than $k+1$ from one single chain of blocks. So, it is clear that $S^{\prime}$ contains vertices from at least two distinct chains of blocks.

Consider the possible cases:
			\begin{description}
				\item[Case 1:] $S'$ contains only elements from two distinct chains of blocks.\\
				Let the chains of blocks be $C_1$ and $C_2$  with $u\in V(C_1)\cap S'$,$w \in V(C_2)\cap S'$ and $u\notin V(C_2)\cap S'$,$w \notin V(C_1)\cap S'$. Since $|S'|\geq k+2$, in $\langle {S'}\setminus\{u\}\rangle$ contain all elements of $\langle S'\rangle$ except some elements of $\langle S'\rangle \cap V(C_1)$. Similarly, $\langle {S'}\setminus\{w\}\rangle$ contain all elements of $\langle S'\rangle$ except some elements of $\langle S'\rangle \cap V(C_2)$. Then $ \displaystyle \langle {S'}\setminus\{u\}\rangle \cup \langle {S'}\setminus\{w\}\rangle = \langle S'\rangle$. Thus $S'$ is a C-dependent set.
				\item[Case 2:] $S'$  contains elements from more than two  distinct longest chains of blocks.\\
				Let $C_1$, $C_2$,\ldots,$C_n$ be the least possible number of distinct longest chains of blocks with $u_i\in V(C_i)\cap S'$ and $u_i$ not in any other chains $C_j$, $j\neq i$ for $i\in\{1,2,\ldots,n\}$. In $\langle {S'}\setminus\{u_i\}\rangle$ contains all elements of $\langle S'\rangle$ except some  elements of $\langle S'\rangle \cap V(C_i)$ for $i\in\{1,2,\ldots,n\}$.  Since $|S'|\geq k+2$,  $\displaystyle  \bigcap_{m=1} ^{n}\left( \langle S' \rangle \cap V(C_i)\right)\subseteq \bigcup_{m=1}^{n} \langle S'\setminus\{u_i\}\rangle$. That is, $\displaystyle\langle S' \rangle =\bigcup_{m=1}^{n} \langle S'\setminus\{u_i\}\rangle$. This implies $S'$ is an C-dependent set.
			\end{description}
			Thus, there is no C-independent set of size greater than $k+1$. Hence, the Carathéodory number  $c_{\Delta}(G) = k+1$.

\item[(iii)]  Let $G$ be a block graph containing blocks isomorphic to the complete graph $K_2$, and let $k$ be the maximum number of consecutive blocks that does not contain $K_2$ as a block among the chains of blocks in $G$. From (i), there is a C-independent set of size $k+1$ from these consecutive blocks.

Now, we need to prove that there is no C-independent set of size greater than $k+1$. Suppose there is a C-independent set $S^{\prime}$ with $\left|S^{\prime}\right| \geq k+2$. By Lemma \ref{cara_properties}, $S^{\prime}$ contains two vertices from at least one block and no more than two vertices from a single block. Since it is not possible to obtain an C-independent set of size $k+2$ from consecutive blocks of a single chain, $S'$ contains at most $k+1$ elements from consecutive blocks of a single chain.

  So $S'$ contains elements from non-consecutive blocks. From Lemma \ref{cara_properties} (ii), there exist at least two vertices say, $u,v\in S'$ with $uv\in E(G)$, let  $C$ be the consecutive block of chain containing $u$ and $v$. Let $w$ be in $S'$ but not in $C$. Then, both $\langle S\setminus\{u\}\rangle$ and $\langle S\setminus\{v\}\rangle$ contain all elements of $\langle S'\rangle$ except some elements of $V(C)\cap \langle S'\rangle$. Also, $\langle S\setminus\{w\}\rangle$ contain  all the elements in $V(C)\cap \langle S'\rangle$. Therefore, $\langle {S'}\setminus\{u\}\rangle \cup \langle {S'}\setminus\{v\}\rangle\cup \langle {S'}\setminus\{w\}\rangle = \langle S'\rangle$. This implies, $S'$ is an C-dependent set.
			
			Thus, there is no C-independent set of size greater than $k+1$. Hence, the Carathéodory number $c_{\Delta}(G) = k+1$.
\end{itemize}
\end{proof}
\section{Exchange number}\label{section-exchange}
	In this section, we deal with the exchange number in the context of $\Delta$-convexity. 
The following lemma discuss some necessary conditions for a set to be an E-independent set.
	
\begin{lemma}\label{adjacency_exchange_indpt}
			If $S$ is an E-independent set of $G$ and $|S|\geq3$, then the following holds.
			\begin{itemize}
				\item[(i)]induced subgraph of $S$ contains at least one edge of $G$.
				\item[(ii)] no three vertices of $S$ form a $K_3$ in $G$.
				\item[(iii)] there is at most one vertex in $S$ that is not part of a $K_3$ in $G$.
			\end{itemize}
		\end{lemma}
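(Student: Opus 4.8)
The plan is to prove all three parts by contraposition: assuming the stated property fails, I would show that $S$ is \emph{E-dependent}, i.e.\ that for \emph{every} $p\in S$ one has $\langle S\setminus\{p\}\rangle\subseteq\bigcup_{a\in S\setminus\{p\}}\langle S\setminus\{a\}\rangle$, contradicting E-independence. Throughout I would use monotonicity of the hull ($A\subseteq B\Rightarrow\langle A\rangle\subseteq\langle B\rangle$) and the elementary identity $(S\setminus\{a_1\})\cup(S\setminus\{a_2\})=S$ for distinct $a_1,a_2$.

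For (i), suppose the induced subgraph of $S$ has no edge. Then no vertex can form a triangle with two vertices of any subset of $S$, so $\langle T\rangle=T$ for every $T\subseteq S$. Since $|S|\ge 3$, for fixed $p$ the set $S\setminus\{p\}$ has two distinct elements $a_1,a_2$, and $\langle S\setminus\{a_1\}\rangle\cup\langle S\setminus\{a_2\}\rangle=(S\setminus\{a_1\})\cup(S\setminus\{a_2\})=S\supseteq\langle S\setminus\{p\}\rangle$, so the covering condition holds for all $p$. For (ii), suppose $u,v,w\in S$ form a triangle. The key observation is that for each triangle vertex $t\in\{u,v,w\}$ the other two regenerate $t$, whence $t\in\langle S\setminus\{t\}\rangle$ and therefore $\langle S\setminus\{t\}\rangle=\langle S\rangle$. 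For any $p\in S$, the set $S\setminus\{p\}$ still contains at least two of $u,v,w$; choosing such a triangle vertex $a\neq p$ gives $\langle S\setminus\{a\}\rangle=\langle S\rangle\supseteq\langle S\setminus\{p\}\rangle$, so again the covering condition holds for every $p$.

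The substantive case is (iii). First I would isolate the key lemma: if $x\in V(G)$ lies on no triangle of $G$, then for every $T\ni x$ one has $\langle T\rangle=\langle T\setminus\{x\}\rangle\cup\{x\}$. Indeed, $\langle T\setminus\{x\}\rangle\cup\{x\}$ contains $T$ and is $\Delta$-convex: any $w$ forming a triangle with two of its vertices would either have both those vertices in $\langle T\setminus\{x\}\rangle$ (forcing $w\in\langle T\setminus\{x\}\rangle$ by convexity) or use $x$ as a triangle vertex (impossible, as $x$ lies on no triangle). Assuming now that two vertices $x,y\in S$ both lie on no triangle, I would verify the covering condition for each $p$. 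If $p\notin\{x,y\}$, applying the lemma twice gives $\langle S\setminus\{p\}\rangle=\langle S\setminus\{p,x,y\}\rangle\cup\{x,y\}$, while $\langle S\setminus\{x\}\rangle$ and $\langle S\setminus\{y\}\rangle$ each contain $\langle S\setminus\{x,y\}\rangle\supseteq\langle S\setminus\{p,x,y\}\rangle$ and contribute $y$ and $x$ respectively; their union covers $\langle S\setminus\{p\}\rangle$. If $p\in\{x,y\}$, say $p=x$, then $\langle S\setminus\{x\}\rangle=\langle S\setminus\{x,y\}\rangle\cup\{y\}$, where $\langle S\setminus\{y\}\rangle$ covers $\langle S\setminus\{x,y\}\rangle$, and since $|S|\ge 3$ there is some $a\in S\setminus\{x\}$ with $a\neq y$, for which $y\in S\setminus\{a\}\subseteq\langle S\setminus\{a\}\rangle$ recovers the leftover vertex $y$. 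In every case $S$ is E-dependent.

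I expect the main obstacle to be the clean formulation and proof of the isolation lemma for (iii), together with the bookkeeping in the subcase $p\in\{x,y\}$: there the removed vertex is itself isolated, so one must invoke $|S|\ge 3$ to find a third vertex of $S$ that carries the leftover isolated vertex back into the covering union. By contrast, parts (i) and (ii) are routine once E-dependence has been reformulated as the covering condition above.
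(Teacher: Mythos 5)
Your proof is correct and follows essentially the same route as the paper's: all three parts are proved by contraposition, showing that failure of the stated property forces the covering condition (E-dependence) for every pivot, with parts (i) and (ii) matching the paper's argument almost verbatim and part (iii) resting on the same key fact that a vertex lying on no triangle peels off the hull (your isolation lemma is equivalent to the paper's identity $\langle S\setminus\{u\}\rangle=\langle S\rangle\setminus\{u\}$). The only difference is organizational in (iii): you verify the covering pivot-by-pivot, splitting on whether $p\in\{x,y\}$, whereas the paper splits on $|S|=3$ versus $|S|>3$ and concludes from union identities of the form $\langle S\setminus\{u\}\rangle\cup\langle S\setminus\{w\}\rangle=\langle S\rangle$; your bookkeeping is, if anything, slightly cleaner since it avoids the cardinality cases.
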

		\begin{proof}
		
			\begin{itemize}
				\item[(i)]

    Let $S$ be an E-independent set. 
				Assume that the induced subgraph of $S$ does not contain an edge. Since $S$ is an E-independent set, there exists $p \in S$ such that there is $\displaystyle p' \in \langle S\setminus\{p\} \rangle \setminus \bigcup_{a\in S\setminus\{p\}} \langle S\setminus\{a\}\rangle$.
				If there is no $u, v \in S$ such that $uv \in E(G)$, then
				$\langle S\setminus\{p\} \rangle = S \setminus \{p\}$ and 	$\langle S\setminus\{a\}\rangle = S \setminus \{a\}$. This implies $\displaystyle \bigcup_{a\in S\setminus\{p\}} \langle S\setminus\{a\}\rangle = S$, a contradiction. Hence, the induced subgraph of $S$ contains an edge of $G$.
				
				\item[(ii)] Let $S$ be an E-independent set. Assume there exists $u,v,w \in S$ such that $u,v,w$ form $K_3$ in $G$. Since $S$ is an E-independent set, there exists $p \in S$ with $\displaystyle p' \in \langle S\setminus\{p\} \rangle \setminus \bigcup_{a\in S\setminus\{p\}} \langle S\setminus\{a\}\rangle$, for some $p'\in S$. 
		
		Since $u,v,w$ form $K_3$, $\langle S\setminus\{u\}\rangle = \langle S\setminus\{v\}\rangle = \langle S\setminus\{w\}\rangle = \langle S\rangle$, which implies that $\displaystyle \bigcup_{a\in S\setminus\{p\}} \langle S\setminus\{a\}\rangle = \langle S\rangle$. So $\displaystyle\langle S\setminus\{p\}\rangle \subseteq \bigcup_{a\in S\setminus\{p\}} \langle S\setminus\{a\}\rangle$, leading to a contradiction, since $S$ is an E-independent set. Therefore, there does not exist three vertices $u,v,w \in S$ such that $u,v,w$ form a $K_3$ in $G$.
				
				\item[(iii)]  Let $S$ be an E-independent set. 	Suppose, for contradiction, that $S$ contains two vertices $u$ and $v$ such that neither is part of a $K_3$ in $G$. Then, we have $\langle S\setminus\{u\} \rangle= \langle S\rangle \setminus \{u\}$ and $\langle S\setminus\{v\} \rangle= \langle S\rangle \setminus \{v\}$. 
	
	Since $|S| \geq 3$, we consider two cases:
	\begin{description}

		\item[Case 1:] $|S| = 3$.\\
		There exists $w\in S$ such that $u,v \in \langle S\setminus\{w\}\rangle$. Then, we obtain $\langle S\setminus\{u\}\rangle\cup \langle S\setminus\{w\}\rangle=\langle S\setminus\{v\} \rangle\cup \langle S\setminus\{w\}\rangle=\langle S\setminus\{u\} \rangle\cup \langle S\setminus\{v\}\rangle=\langle S\rangle$,  contradicting the assumption that $S$ is an E-independent.
		\item[Case 2:] $|S|>3$.\\
	There exist $w,w'\in S$ such that $u,v \in \langle S\setminus\{w\}\rangle$ and $u,v \in \langle S\setminus\{w'\}\rangle$. This implies, $\langle S\setminus\{u\}\rangle\cup \langle S\setminus\{w\}\rangle=\langle S\setminus\{v\} \rangle\cup \langle S\setminus\{w\}\rangle=\langle S\setminus\{u\} \rangle\cup \langle S\setminus\{w'\}\rangle=\langle S\setminus\{v\} \rangle\cup \langle S\setminus\{w'\}\rangle=\langle S\rangle$, again contradicting the E-independence of  $S$. 	Thus, at most one vertex in $S$ is not part of a  $K_3$.
			
	\end{description}
    \end{itemize}
		\end{proof}

	\begin{proposition}\label{prop-e-independent}
		If $S\subseteq V(G)$ and $u,v,w \in S$ such that $\langle S\setminus\{u\}\rangle\cup \langle S\setminus\{v\}\rangle=\langle S\setminus\{u\}\rangle\cup \langle S\setminus\{w\}\rangle=\langle S\setminus\{v\}\rangle\cup \langle S\setminus\{w\}\rangle=\langle S\rangle$, then $S$ is an E-dependent set.
	\end{proposition}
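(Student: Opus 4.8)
The plan is to verify the definition of E\nobreakdash-dependence head\nobreakdash-on: I must show that for \emph{every} candidate pivot $p\in S$ the containment $\langle S\setminus\{p\}\rangle\subseteq\bigcup_{a\in S\setminus\{p\}}\langle S\setminus\{a\}\rangle$ holds, so that no vertex of $S$ can witness E\nobreakdash-independence. The whole argument rests on two elementary facts: the monotonicity of the convex hull, which gives $\langle S\setminus\{p\}\rangle\subseteq\langle S\rangle$ for each $p\in S$, together with a simple pigeonhole observation about the three distinguished (distinct) vertices $u,v,w$.

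First I would fix an arbitrary $p\in S$ and observe that, since $u,v,w$ are three distinct vertices, the removed vertex $p$ can coincide with at most one of them. Hence at least two of $u,v,w$ lie in $S\setminus\{p\}$; call them $x$ and $y$. Then both indices $x$ and $y$ occur in the union $\bigcup_{a\in S\setminus\{p\}}\langle S\setminus\{a\}\rangle$, so this union contains $\langle S\setminus\{x\}\rangle\cup\langle S\setminus\{y\}\rangle$.

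Next I would invoke the hypothesis: whichever pair $\{x,y\}\subseteq\{u,v,w\}$ survives, one of the three given equalities asserts precisely that $\langle S\setminus\{x\}\rangle\cup\langle S\setminus\{y\}\rangle=\langle S\rangle$. Combining this with the inclusion from the previous step and with monotonicity yields
\[
\langle S\setminus\{p\}\rangle\subseteq\langle S\rangle=\langle S\setminus\{x\}\rangle\cup\langle S\setminus\{y\}\rangle\subseteq\bigcup_{a\in S\setminus\{p\}}\langle S\setminus\{a\}\rangle .
\]
Since $p\in S$ was arbitrary, the defining condition for E\nobreakdash-dependence is satisfied at every vertex, and therefore $S$ is E\nobreakdash-dependent.

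I do not anticipate a genuine obstacle; the only point demanding care is the pigeonhole step, namely guaranteeing that for \emph{every} choice of $p$ at least two of $u,v,w$ remain as admissible indices in the union. This is exactly why all three hypotheses are required rather than a single one: depending on which of $u,v,w$ happens to equal the excluded vertex $p$, a different surviving pair must be used, and the statement conveniently supplies the needed equality $\langle S\setminus\{x\}\rangle\cup\langle S\setminus\{y\}\rangle=\langle S\rangle$ for each of the three possible pairs.
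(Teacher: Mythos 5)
Your proof is correct and takes essentially the same approach as the paper's own argument: both rest on monotonicity of the hull together with selecting, for each candidate pivot $p$, a pair $\{x,y\}\subseteq\{u,v,w\}$ surviving in $S\setminus\{p\}$ and invoking the corresponding hypothesis equality $\langle S\setminus\{x\}\rangle\cup\langle S\setminus\{y\}\rangle=\langle S\rangle$. The only cosmetic difference is that you fold the paper's explicit case split ($p\notin\{u,v,w\}$ versus $p\in\{u,v,w\}$) into a single pigeonhole observation.
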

	\begin{proof}
		For any $x\notin\{u,v,w\}$, we have $\langle S\setminus\{x\}\rangle\subseteq \langle S\setminus\{u\}\rangle\cup \langle S\setminus\{v\}\rangle=\langle S \rangle$.
		Now for $x\in \{u,v,w\}$, for instance take $x=u$. Then $\langle S\setminus\{x\}\rangle\subseteq \langle S\setminus\{v\}\rangle\cup \langle S\setminus\{w\}\rangle  =\langle S \rangle$. That is, for any $p \in S$, $\displaystyle\langle S \setminus \{p\}\rangle\subseteq\bigcup_{a\in S\setminus\{p\}} \langle S \setminus \{a\}\rangle$. Hence, $S$ is an E-dependent set.
	\end{proof}
 Using a similar argument, we get the following remark.
 \begin{remark}\label{remark-e-independent}
     Let $S\subseteq V(G)$ and $u,v,w,x \in S$ such that $\langle S\setminus\{u\}\rangle\cup \langle S\setminus\{v\}\rangle=\langle S\setminus\{w\}\rangle\cup \langle S\setminus\{x\}\rangle=\langle S\rangle$, then $S$ is an E-dependent set.
 \end{remark}

Now we have the following two observations which follow immediately from the definition of $\Delta$-convexity.
	\begin{observation}
		If $G$ is a triangle-free graph, then $e_\Delta(G)=2$.
	\end{observation}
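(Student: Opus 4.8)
The plan is to prove that for any triangle-free graph $G$, the exchange number satisfies $e_\Delta(G)=2$, which requires establishing two things: that there is an E-independent set of size $2$, and that every set of size at least $3$ is E-dependent. For the lower bound, I would first observe that since $G$ is triangle-free, the $\Delta$-interval operation is trivial: for any set $T$ we have $[T]_\Delta=T$, so $\langle T\rangle=T$ and the $\Delta$-convex hull of every set is the set itself. I would take any single edge $\{u,v\}\in E(G)$ (assuming $G$ has at least one edge; the degenerate edgeless case should be handled separately or excluded) and set $S=\{u,v\}$. Then $\langle S\rangle=\{u,v\}$ while $\langle S\setminus\{u\}\rangle=\{v\}$ and $\langle S\setminus\{v\}\rangle=\{u\}$, so taking $p=u$ we get $v\in\langle S\setminus\{u\}\rangle\setminus\langle S\setminus\{v\}\rangle$, witnessing E-independence; hence $e_\Delta(G)\geq 2$.

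For the upper bound I would show that every $S\subseteq V(G)$ with $|S|\geq 3$ is E-dependent. The key simplification is again that $\langle T\rangle=T$ for all $T$ because $G$ is triangle-free, so for any $p\in S$ we have $\langle S\setminus\{p\}\rangle=S\setminus\{p\}$. I then need to verify, for each fixed $p\in S$, that $S\setminus\{p\}\subseteq\bigcup_{a\in S\setminus\{p\}}\langle S\setminus\{a\}\rangle=\bigcup_{a\in S\setminus\{p\}}(S\setminus\{a\})$. Since $|S|\geq 3$, the set $S\setminus\{p\}$ has at least two elements, so for any element $q\in S\setminus\{p\}$ one can always find some $a\in S\setminus\{p\}$ with $a\neq q$ (because there are at least two choices of $a$), giving $q\in S\setminus\{a\}$. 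Thus every element of $S\setminus\{p\}$ lies in the union, establishing the inclusion for each $p$ and hence E-dependence.

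I expect the argument to be essentially routine precisely because triangle-freeness collapses the hull operator to the identity, removing all the combinatorial interaction between vertices that makes the general case hard. The only genuine subtlety is the counting step in the upper bound: one must use $|S\setminus\{p\}|\geq 2$ to guarantee that for each target element $q$ there is a \emph{distinct} index $a$ omitting a different vertex, so that $q$ survives in $S\setminus\{a\}$. This is exactly where $|S|\geq 3$ is used, and it is also consistent with the lower bound, since for $|S|=2$ this distinctness argument fails and E-independence can occur. I would therefore present the proof as two short paragraphs mirroring the two inequalities, emphasizing the reduction $\langle T\rangle=T$ at the outset as the unifying observation.
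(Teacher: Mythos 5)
Your proof is correct and follows essentially the same route as the paper: triangle-freeness collapses the hull operator to the identity, any two-element set is then E-independent, and any set of size at least three is E-dependent by exactly the counting argument you describe. One small simplification: adjacency of $u$ and $v$ is never actually used anywhere in your argument, so you can take \emph{any} two distinct vertices as the E-independent pair (as the paper does), which eliminates the edgeless-case caveat entirely.
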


	\begin{observation}
		If $G$ is a complete graph $K_n$ for $n>2$, then $e_\Delta(K_n)=2$.
	\end{observation}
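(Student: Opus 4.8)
The plan is to reduce everything to a single hull computation and then verify the two inequalities $e_\Delta(K_n)\ge 2$ and $e_\Delta(K_n)\le 2$ separately.

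First I would establish the structural fact that governs the whole proof: because $K_n$ is complete, any vertex $w\notin S$ together with any two vertices $u,v\in S$ spans a triangle, so whenever $|S|\ge 2$ the $\Delta$-interval already equals $V(K_n)$, and hence $\langle S\rangle=V(K_n)$. Since $n>2$ guarantees that $K_n$ actually contains triangles, this computation is the only non-formal ingredient and it immediately controls every convex hull that appears in the exchange condition.

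For the lower bound I would exhibit an E-independent pair. Taking $S=\{u,v\}$ with $u\ne v$, one has $\langle S\setminus\{u\}\rangle=\langle\{v\}\rangle=\{v\}$ while $\bigcup_{a\in S\setminus\{u\}}\langle S\setminus\{a\}\rangle=\langle\{u\}\rangle=\{u\}$, so that $v\in\langle S\setminus\{u\}\rangle\setminus\bigcup_{a\in S\setminus\{u\}}\langle S\setminus\{a\}\rangle$. Thus $u$ is a pivot and $S$ is E-independent, giving $e_\Delta(K_n)\ge 2$. For the upper bound I would take an arbitrary $S$ with $|S|\ge 3$ and show it is E-dependent. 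For every $p\in S$ the set $S\setminus\{p\}$ retains at least two vertices, so by the structural fact $\langle S\setminus\{p\}\rangle=V(K_n)$; likewise $\langle S\setminus\{a\}\rangle=V(K_n)$ for each $a\in S\setminus\{p\}$, whence $\bigcup_{a\in S\setminus\{p\}}\langle S\setminus\{a\}\rangle=V(K_n)$. Therefore $\langle S\setminus\{p\}\rangle\subseteq\bigcup_{a\in S\setminus\{p\}}\langle S\setminus\{a\}\rangle$ holds for every $p\in S$, which is precisely E-dependence. Combining the two bounds yields $e_\Delta(K_n)=2$.

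The argument carries no serious obstacle once the hull identity $\langle S\rangle=V(K_n)$ (for $|S|\ge 2$) is in hand; the only point demanding attention is the bookkeeping of cardinalities in the upper bound, where one must invoke $|S|\ge 3$ to ensure that deleting one vertex, and then a second, still leaves at least two vertices, so that every hull occurring in the union collapses to $V(K_n)$.
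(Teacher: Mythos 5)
Your proof is correct and follows essentially the same route as the paper's: both rest on the fact that in $K_n$ ($n>2$) the hull of any set of at least two vertices is all of $V(K_n)$, then exhibit a two-element E-independent set via the singleton hulls $\langle S\setminus\{a_i\}\rangle=\{a_j\}$, and show any set of size at least three is E-dependent because every hull in the exchange condition collapses to $V(K_n)$. Your write-up is merely more explicit about the pivot and the cardinality bookkeeping than the paper's terse version.
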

	
From Sierksma's inequalities (Theorem \ref{inequalities}) and Theorem \ref{caratheodory_bound}, we can deduce the following lemma.

\begin{lemma}\label{exchange_bound}
If $G$ is a graph with $k$ triangles, then the exchange number $e_\Delta$ is bounded by $k+2$.
\end{lemma}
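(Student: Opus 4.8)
The plan is to derive Lemma~\ref{exchange_bound} as a direct consequence of the two results the statement explicitly invokes, namely the Sierksma inequalities (Theorem~\ref{inequalities}) and the Carath\'eodory bound (Theorem~\ref{caratheodory_bound}), so that no independent combinatorial argument about triangles is actually required. Concretely, I would start by recording what each cited result gives us. From Theorem~\ref{caratheodory_bound}, if $G$ has $k$ triangles then $c_\Delta(G)\le k+1$. From the left-hand Sierksma inequality, $e-1\le c$, which rearranges to $e\le c+1$. Substituting the Carath\'eodory bound into this yields $e_\Delta(G)\le c_\Delta(G)+1\le (k+1)+1=k+2$, which is exactly the asserted bound.

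The write-up would therefore be a two-line chain of inequalities. First I would state: by Theorem~\ref{caratheodory_bound} we have $c_\Delta(G)\le k+1$. Then I would invoke Theorem~\ref{inequalities} in the form $e_\Delta(G)\le c_\Delta(G)+1$ (the rearrangement of $e-1\le c$ applied to the $\Delta$-convexity space, which is a legitimate convex structure so the inequalities apply). Combining the two gives $e_\Delta(G)\le k+2$, completing the argument. The only subtlety worth a sentence is confirming that the $\Delta$-convexity of $G$ genuinely forms a convex structure in the sense required by Theorem~\ref{inequalities}; this is immediate from the earlier discussion, since $\Delta$-convex sets are closed under intersection and contain $\emptyset$ and $V(G)$.

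There is essentially no obstacle here: the lemma is a packaging of two prior statements, and the ``hard part'' is merely making sure the direction of the Sierksma inequality is applied correctly (it is $e-1\le c$, giving an upper bound on $e$ in terms of $c$, not the reverse). I would double-check the edge case where $G$ is triangle-free ($k=0$), where the bound predicts $e_\Delta(G)\le 2$, consistent with the observation that $e_\Delta(G)=2$ for triangle-free graphs; this sanity check reassures that the inequality is not off by one. No induction, case analysis, or structural graph argument is needed, so the proof is short and purely deductive.
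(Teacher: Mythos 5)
Your proposal is correct and is exactly the paper's own argument: the lemma is stated there as an immediate deduction from Theorem~\ref{inequalities} (in the form $e_\Delta \le c_\Delta + 1$) combined with the bound $c_\Delta(G) \le k+1$ of Theorem~\ref{caratheodory_bound}. The paper does not even write out the two-line chain of inequalities you give, so your version is, if anything, slightly more explicit.
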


This lemma establishes an upper bound on the exchange number based on the number of triangles present in the graph. It is worth noting that there exist graphs that attain the bounds in Lemma \ref{exchange_bound}, as stated in the following remark.

 	\begin{figure}[H]
            \centering
		\epsfxsize=15truecm \slika{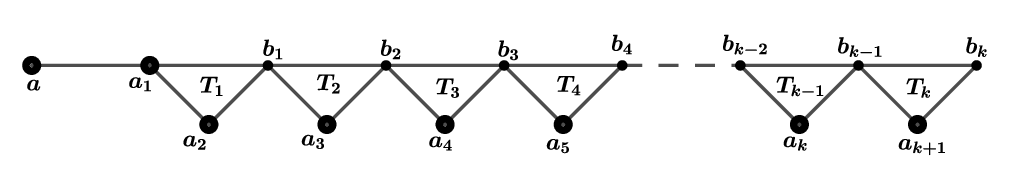}\vspace{-0.4in}
		\caption{Graph $G$ with $k$ triangles and $e_\Delta(G)=k+2$.}
		\label{delta_exchange}
	\end{figure}

	\begin{remark}
		There exists a graph $G$ with $k$ triangles that has an exchange number of $k+2$. For example, in Figure \ref{delta_exchange}, $G$ is a graph with $k$ triangles, and the exchange number is $k+2$. Here, $S=\{a, a_1, a_2, a_3, \ldots, a_{k+1}\}$ is the E-independent set. Since $b_{k} \in \langle S \setminus \{a\} \rangle$ but $\displaystyle b_{k} \notin \bigcup_{b\in S\setminus\{a\}}\langle S\setminus\{b\}\rangle$.
	\end{remark}
Furthermore, the exchange number can be arbitrarily large in the sense that, for any natural number $n$, we can construct a graph $G$ with $e_\Delta(G)=n$, as stated in the following proposition.
	\begin{proposition}
		For any natural number $n>1$, there exists a graph $G$ with $e_\Delta(G)=n$.
	\end{proposition}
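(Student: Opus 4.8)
The plan is to prove that for every natural number $n>1$ there is a graph $G$ with $e_\Delta(G)=n$ by exhibiting an explicit construction, in direct parallel to the earlier proposition for the Carath\'eodory number. For the base cases $n=2$, any triangle-free graph (or any complete graph $K_m$ with $m>2$) already satisfies $e_\Delta=2$ by the two observations immediately preceding this proposition, so the content is entirely in the range $n\geq 3$. For these I would build a graph with exactly $n-2$ triangles arranged so that the exchange bound of Lemma~\ref{exchange_bound} is met with equality; the graph of Figure~\ref{delta_exchange} (with $k$ triangles yielding $e_\Delta=k+2$) is precisely of this shape, so I would take that family and set $k=n-2$.

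First I would fix the construction: take the graph $G$ from Figure~\ref{delta_exchange} with $k=n-2$ triangles, so that by Lemma~\ref{exchange_bound} we immediately have the upper bound $e_\Delta(G)\leq k+2=n$. The remaining task is the lower bound, namely to produce an E-independent set of size exactly $n$. Following the remark accompanying that figure, I would take $S=\{a,a_1,a_2,\ldots,a_{k+1}\}$, which has $|S|=k+2=n$, and designate $a$ as the pivot. I would then verify E-independence directly from the definition by exhibiting a witness: show that some vertex (the $b_k$ of the remark) lies in $\langle S\setminus\{a\}\rangle$ but in none of the sets $\langle S\setminus\{b\}\rangle$ for $b\in S\setminus\{a\}$. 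Concretely this means computing the hulls $\langle S\setminus\{a\}\rangle$ and $\langle S\setminus\{a_i\}\rangle$ for each $i$, using only the rule that a vertex enters a hull when it completes a triangle with two vertices already present, and checking that $b_k$ survives exactly one of these deletions.

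The main obstacle I expect is the bookkeeping in these hull computations: one must track how the $\Delta$-interval operation propagates through the chain of triangles in the construction and confirm that removing $a$ still allows the cascade that generates $b_k$, while removing any $a_i$ breaks the chain before $b_k$ can be formed. This is where the structure of Figure~\ref{delta_exchange} must be pinned down precisely (the adjacencies among the $a_i$, the apex $a$, and the auxiliary vertices $b_j$), since the argument rests on a single vertex being reachable along exactly one deletion path. I would present the hull of each $\langle S\setminus\{a_i\}\rangle$ in closed form, analogous to the explicit list given in the Carath\'eodory proposition, so that the membership and non-membership of $b_k$ can be read off by inspection.

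Once the witness is established, the conclusion combines the two halves: the witness shows $e_\Delta(G)\geq|S|=n$, and Lemma~\ref{exchange_bound} with $k=n-2$ gives $e_\Delta(G)\leq n$, hence $e_\Delta(G)=n$, completing the proof for all $n\geq 3$ and, together with the base cases, for every natural number $n>1$.
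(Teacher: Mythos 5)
Your proposal is correct, but it takes a genuinely different route from the paper. The paper reuses the graph of Figure~\ref{delta} (the same graph as in the Carath\'eodory proposition, which has $n-1$ triangles), exhibits the E-independent set $S=\{a_1,\ldots,a_n\}$ with pivot $a_n$ and witness $b_{n-2}$, and then must do extra work: since Lemma~\ref{exchange_bound} only gives $e_\Delta(G)\leq (n-1)+2=n+1$ for that graph, the paper rules out E-independent sets of size $n+1$ by an explicit three-case analysis (sets containing a whole triangle, sets with two vertices in two triangles, etc.). You instead choose the family of Figure~\ref{delta_exchange} with $k=n-2$ triangles, so the upper bound $e_\Delta(G)\leq k+2=n$ falls out of Lemma~\ref{exchange_bound} immediately, and only the lower bound (E-independence of $S=\{a,a_1,\ldots,a_{k+1}\}$ with pivot $a$ and witness $b_k$, exactly as asserted in the paper's own remark accompanying that figure) needs verification; your plan to compute the deletion hulls in closed form is the right and sufficient check, and it degenerates correctly even at $n=3$ (one triangle plus the extra vertex $a$). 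What each approach buys: yours is shorter and structurally cleaner, since choosing the triangle count to make the general bound tight eliminates the maximality case analysis entirely; the paper's choice buys economy of constructions, as a single graph family witnesses both $c_\Delta(G)=n$ and $e_\Delta(G)=n$ simultaneously. Do make sure, when writing the final version, that the adjacencies of Figure~\ref{delta_exchange} are stated explicitly (the chain of triangles generating $b_1,\ldots,b_k$ in sequence, with $a$ lying in no triangle), since your witness argument rests on $b_k$ being generated only when the full chain $a_1,\ldots,a_{k+1}$ is present.
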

	
	\begin{proof}
		For $n=2$, consider any complete graph or triangle-free graph  $G$. In this case, $e_\Delta(G)=2$.
		
		For $n\geq 3$, consider the graph $G$ depicted in Figure \ref{delta}. Let $S=\{a_1,a_2,a_3,\ldots,a_n\}$. Then $\langle S\rangle=V(G)$, $\langle S\setminus\{a_1\}\rangle=\{a_2,a_3,a_4,\ldots, a_n\}$, $\langle S\setminus\{a_2\}\rangle=\{a_1,a_3,a_4,\ldots ,a_n\}$, $\langle S\setminus\{a_i\}\rangle=\{a_1,a_2,b_1,a_3,b_2,a_4,\ldots ,a_{i-1},b_{i-2},a_{i+1},a_{i+2},\ldots ,a_n\}$ for $i\in\{3,4,5,\ldots ,n-1\}$ and $\langle S\setminus\{a_n\}\rangle=\{a_1,a_2,b_1,a_3,b_2,a_4,\ldots,b_{n-3},a_{n-1},b_{n-2}\}$.
		Since $b_{n-2}\in \langle S\setminus\{a_n\}\rangle$ but $ \displaystyle b_{n-2}\notin\bigcup_{a\in S\setminus\{a_n\}}\langle S\setminus\{a\}\rangle$, $S$ is E-independent.
		
		 Now, if we add any element $x \in \{b_1,b_2,b_3,\ldots ,b_{n-2},b\}$ to $S$, then $\langle S\setminus\{a_1\}\rangle=\langle S\setminus\{x\}\rangle=V(G)$. That is, $S$ becomes E-dependent.
		 
		 Next, we need to prove that there is no E-independent set of size greater than $n$. Suppose there is an E-independent set $S'$ with $|S'|\geq n+1$.
		 Consider the possible cases
		 \begin{itemize}
		 	\item[(a)] $S'$ contains all vertices of a triangle, say $\{a,b,c\}$. Then $\langle S'\setminus\{a\}\rangle=\langle S'\setminus\{b\}\rangle=\langle S'\setminus\{c\}\rangle=\langle S'\rangle$, impliying $S'$ is E-dependent.
		 	\item[(b)]$S'$ contains one vertex from each triangle. $G$ contains only $n-1$ triangles. So $S'$ contains two vertices from two triangles, say $T_i$ and $T_j$. Let  $a_i,b_i,a_j,b_j \in S'$, where $a_i,b_i \in T_i$ and $a_j,b_j \in T_j$. Then $\langle S'\setminus{a_i}\rangle=\langle S'\setminus{b_i}\rangle=\langle S'\setminus{a_j}\rangle=\langle S'\setminus{b_j}\rangle=V(G)$. This implies $S'$ is E-dependent. 
		 	\item[(c)] $S'$ does not contain vertices from some triangles. That is, $S'$ contains two vertices from at least three triangles. So there exists a sequence of triangles $T_i,T_{i+1},\ldots, T_{i+r}$ ($i\in \{1,2,\ldots, k\}$ and $i+r\leq k$) such that $S'$ contains at least one vertex from each of these triangles and two vertices from at least two triangles, say $T_x$ and $T_y$, $x,y\in\{i,i+1,\ldots,i+r\}$. Let $a_x,b_x \in T_x$ and $a_y,b_y \in T_y$. Then $\langle S'\setminus{a_x}\rangle=\langle S'\setminus{b_x}\rangle=\langle S'\setminus{a_y}\rangle=\langle S'\setminus{b_y}\rangle=\langle S'\rangle$. This implies $S'$ is E-dependent. 
		 \end{itemize}
		Hence for the graph in Figure~\ref{delta},  $e_\Delta(G)=n$.
	\end{proof}

	
	
			

\begin{lemma}
If $G$ is a $2$-connected chordal graph, then the exchange number $e_\Delta(G)$ is either 2 or 3.
\end{lemma}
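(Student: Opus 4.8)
The plan is to sandwich $e_\Delta(G)$ between $2$ and $3$ by combining the Carath\'eodory value already computed for $2$-connected chordal graphs with Sierksma's inequalities. The heavy lifting has in effect been done by the earlier theorem establishing $c_\Delta(G)=2$ for every $2$-connected chordal graph, so the present statement should reduce to a short two-sided bound.

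For the \textbf{upper bound}, I would invoke that earlier theorem: since $G$ is $2$-connected and chordal, $c_\Delta(G)=2$. The left-hand part of Sierksma's inequalities (Theorem~\ref{inequalities}) asserts $e-1\leq c$, hence $e_\Delta(G)\leq c_\Delta(G)+1=3$. This gives the ceiling with no extra work.

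For the \textbf{lower bound} I would simply exhibit an E-independent set of size $2$. Because $G$ is $2$-connected it contains at least one edge $uv$; set $S=\{u,v\}$. Then $\langle S\setminus\{u\}\rangle=\langle\{v\}\rangle=\{v\}$ and $\langle S\setminus\{v\}\rangle=\langle\{u\}\rangle=\{u\}$, since the $\Delta$-hull of a single vertex is that vertex. Taking the pivot $p=u$, we have $v\in\langle S\setminus\{u\}\rangle\setminus\bigcup_{a\in S\setminus\{u\}}\langle S\setminus\{a\}\rangle$, as $v\neq u$. Thus $S$ is E-independent and $e_\Delta(G)\geq 2$. Combining the two bounds gives $e_\Delta(G)\in\{2,3\}$.

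I do not expect a genuine obstacle here: the argument is immediate once $c_\Delta(G)=2$ is available, and the lower bound is a trivial verification valid for any graph with an edge. The only point meriting a remark (though not strictly needed for the statement, which merely asserts membership in $\{2,3\}$) is that \emph{both} values are actually realized; one could optionally note that a complete graph attains $e_\Delta=2$ while a suitable $2$-connected chordal graph attains $e_\Delta=3$, confirming the dichotomy is genuine rather than vacuous.
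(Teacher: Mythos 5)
Your proof is correct, but it follows a genuinely different route from the paper. You obtain the upper bound abstractly: the paper's earlier theorem gives $c_\Delta(G)=2$ for $2$-connected chordal graphs, and the left half of Sierksma's inequalities (Theorem~\ref{inequalities}), $e-1\leq c$, then yields $e_\Delta(G)\leq 3$; the lower bound is the trivial observation that any edge $\{u,v\}$ is E-independent, since $\langle\{v\}\rangle=\{v\}\not\subseteq\{u\}=\langle\{u\}\rangle$. This is legitimate and non-circular (the Carath\'eodory theorem is proved independently of the exchange lemma), and it is in the same spirit as the paper's own derivation of Lemma~\ref{exchange_bound} from Theorem~\ref{caratheodory_bound}. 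The paper instead argues directly: it uses Lemma~\ref{adjacency_exchange_indpt} to find an adjacent pair $a,b$ in any E-independent set, then splits on whether every vertex of $G$ is adjacent to $a$ or $b$; in the first case it shows via Lemma~\ref{hull_2_chordal} that no E-independent set of size $3$ exists, and in the second it explicitly constructs an E-independent triple $\{a,b,c\}$ and shows every set of size $4$ is E-dependent. What the paper's longer argument buys is a characterization of \emph{which} value occurs: $e_\Delta(G)=3$ exactly when some vertex is nonadjacent to both endpoints of an edge, and $e_\Delta(G)=2$ otherwise. Your proof establishes only membership in $\{2,3\}$ — which is all the statement asserts — and you correctly flag that realizability of both values would need the kind of explicit examples the paper's case analysis provides.
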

\begin{proof}
		
	Let $G$ be a $2$-connected chordal graph and $S\subseteq V(G)$ is an E-independent set with $|S|>1$. By Lemma~\ref{adjacency_exchange_indpt}, there exist $a,b\in S$ such that $ab\in E(G)$.
   \begin{description}

       \item[Case 1:] Every vertex of $G$ is adjacent to either $a$ or $b$ or both. In this case, there is no E-independent set of size greater than $2$. Assume that there exist an E-independent set containing a vertex $c$ adjacent to either $a$ or $b$ or both. Without loss of generality, we may assume that $c$ is adjacent to $b$. Then by  Lemma \ref{hull_2_chordal},  $\langle a,b\rangle = \langle b,c\rangle = V(G)$, implies that the set is E-dependent. Therefore the exchange number is $2$.
       \item[Case 2:] There exist vertices not adjacent to $a$ and $b$. Let $c$ be such a vertex. Consider $S'=\{a,b,c\}$. Then $\langle S'\rangle = V(G)$, $\langle S'\setminus\{a\}\rangle = \{b,c\}$, $\langle S'\setminus\{b\}\rangle = \{a,c\}$, and $\langle S'\setminus\{c\}\rangle = V(G)$. Clearly, $S'$ is E-independent.
Now, we need to prove that there is no E-independent set of size greater than $3$. Assume that there is an E-independent set $S''$ with $|S''| > 3$. By Lemma \ref{adjacency_exchange_indpt}, there exist $u,v \in S''$ with $uv \in E(G)$. Since $|S''| > 3$, there exist $x,y \in S''$ other than $u$ and $v$. By Lemma~\ref{hull_2_chordal}, $\langle u,v \rangle = V(G)$. This implies $\langle S''\setminus\{x\}\rangle = \langle S''\setminus\{y\}\rangle = V(G)$, making $S''$ E-dependent. Hence $e_\Delta(G) = 3$.
  \end{description}
\vspace{-.5cm}
   \end{proof}
			
			
For block graphs, the exchange number depends on the number of blocks in the graph. We present the following results for block graphs:	
	\begin{theorem}\label{block_exchange}
			Let $G$ be a block graph with $\ell$ blocks. Then the following statements hold:
	\begin{enumerate}
		\item[(i)] If $G$ contains no blocks isomorphic to the complete graph $K_2$ and all the blocks lie in a single chain, then $e_{\Delta}(G) = \ell + 1$.
		\item[(ii)] If $G$ contains no blocks isomorphic to the complete graph $K_2$ and the blocks lie in more than one chain, and $k$ is the number of blocks in the longest chain in $G$, then $e_{\Delta}(G) = k + 2$.
		\item[(iii)] If $G$ contains blocks isomorphic to the complete graph $K_2$ and $k$ is the maximum number of consecutive non-$K_2$ blocks in a chain, then $e_{\Delta}(G) = k + 2$.
		\end{enumerate}	  
	\end{theorem}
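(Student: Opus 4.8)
The plan is to prove each of the three parts by matching an explicit E-independent set (lower bound) against an upper bound, treating the single-chain case (i) separately from the branched and $K_2$-containing cases (ii)--(iii), which share a common construction. Throughout I would use the basic block-graph fact that any two vertices of a complete block $K_m$ with $m\ge 3$ generate the whole block under $[\cdot]_\Delta$; consequently the $\Delta$-hull of a set propagates along a chain through the shared cut vertices, a block ``filling'' as soon as two of its vertices lie in the current hull and then contributing its cut vertices to its neighbours.

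For the upper bounds in (ii) and (iii) I would simply combine Sierksma's inequality (Theorem~\ref{inequalities}), which gives $e_\Delta\le c_\Delta+1$, with the values of $c_\Delta$ for block graphs obtained in Section~\ref{section-caratheodory}; since there $c_\Delta=k+1$ in both situations, this yields $e_\Delta\le k+2$ at once, so the real content of (ii)--(iii) is the lower bound. Here I would start from the longest chain (resp.\ the longest run of consecutive non-$K_2$ blocks) $B_1,\dots,B_k$ and take two non-cut vertices of the end block $B_1$ together with one suitably chosen vertex $u_i$ of each $B_i$ ($i\ge 2$); this is exactly the $(k+1)$-element configuration used in part (i). The $(k+2)$-nd element is a single ``free'' vertex supplied by the extra structure: for (iii) any vertex $r$ lying in a $K_2$-block (hence on no triangle, so that adding it never creates a $K_3$ and never enlarges the hull), and for (ii) a vertex of the branch block, taking $u_i$ at the branch block to be the cut vertex leading off the chain. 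In either case I claim $S$ is E-independent with the free/branch vertex as pivot: removing it leaves the chain's propagation untouched, so a deep vertex $d$ of the far block $B_k$ survives in $\langle S\setminus\{\text{pivot}\}\rangle$, whereas deleting any chain vertex breaks propagation before $B_k$ and deleting one of the two vertices of $B_1$ halts it entirely, so $d$ lies in no other $\langle S\setminus\{a\}\rangle$. This gives $e_\Delta\ge k+2$, hence equality.

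For part (i) the lower bound is the special case of the above with no free vertex: the $(\ell+1)$-element set (double at $B_1$, one vertex per remaining block) is E-independent with pivot $u_\ell$ and witness the cut vertex $c_{\ell-1}$ between $B_{\ell-1}$ and $B_\ell$. The upper bound for (i) is the step I expect to be the main obstacle, because Sierksma only gives $e_\Delta\le c_\Delta+1=\ell+2$, one more than the claimed value, so it must be sharpened directly. Assume $S'$ is E-independent with $|S'|=\ell+2$. By Lemma~\ref{adjacency_exchange_indpt}(ii) no block contains three vertices of $S'$, so each block meets $S'$ in at most two vertices; counting incidences (a non-cut vertex lies in one block, a cut vertex in two) gives $2f+g=|S'|+t$, where $f,g$ count the blocks meeting $S'$ in two and in one vertex and $t$ is the number of cut vertices in $S'$, whence $f=h+t+2\ge 2$ with $h$ the number of blocks disjoint from $S'$. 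Since the $h$ empty blocks split the chain into at most $h+1$ occupied segments while $f\ge h+2$, the pigeonhole principle forces two doubly-met blocks $B_x,B_y$ into one occupied segment. Within that segment each of $B_x,B_y$ fills on its own, and after deleting a vertex $u\in B_x$ the block $B_y$ re-initiates the cascade and refills the whole segment (other segments being untouched), so $\langle S'\setminus\{u\}\rangle=\langle S'\rangle$; symmetrically $\langle S'\setminus\{w\}\rangle=\langle S'\rangle$ for a vertex $w\in B_y$. Choosing any third $v\in S'$, the three pairwise unions are then all equal to $\langle S'\rangle$, so Proposition~\ref{prop-e-independent} shows $S'$ is E-dependent, a contradiction, giving $e_\Delta\le\ell+1$.

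The care points I would flag are: checking that the chain representatives exist as non-cut vertices (immediate, since every block is $K_{\ge 3}$ and so has at least one, and the end block at least two, non-cut vertices), and handling the branch position in (ii) --- when the branch attaches at an end block or at an existing chain cut vertex, the choice of ``$u_i$'' and of the witness $d$ needs a minor adjustment, but the pivot-is-the-extra-vertex mechanism is unchanged. The conceptual point that explains the uniform $+1$ gap between the cases is that a single chain admits no ``free'' pivot, whereas a branch or a $K_2$-block each supplies exactly one extra vertex that can serve as a pivot without disturbing the chain's propagation.
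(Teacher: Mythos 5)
Your proposal is correct in substance and reaches all three equalities, but it routes the upper bounds of (ii) and (iii) differently from the paper, and that difference is worth spelling out. The paper proves $e_\Delta\le k+2$ in (ii) and (iii) by a direct case analysis on how a putative E-independent set $S'$ with $|S'|\ge k+3$ distributes over two or more chains, invoking Proposition~\ref{prop-e-independent} and Remark~\ref{remark-e-independent}; you instead get these bounds for free from Sierksma's inequality $e_\Delta\le c_\Delta+1$ (Theorem~\ref{inequalities}) combined with the block-graph Carath\'eodory values $c_\Delta=k+1$ from Section~\ref{section-caratheodory}. This is shorter, non-circular (both ingredients precede the theorem), and correctly isolates part (i) as the only case where Sierksma is off by one and a direct argument is unavoidable. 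Your direct argument for (i) is also organized differently: the paper splits into ``$S'$ meets every block'' versus ``$S'$ misses some block'' and in the latter case asserts, without justification, that at least three blocks are doubly met; your incidence count $2f+g=|S'|+t$ together with the pigeonhole over the $\le h+1$ occupied segments is a tighter way to produce the two doubly-met blocks in one segment, after which the conclusion via Proposition~\ref{prop-e-independent} is the same. Finally, on the lower bounds your construction coincides with the paper's (double vertex in the end block, one vertex per block, plus a free/branch vertex as pivot in (ii)/(iii)), but your choice of witness --- a deep vertex $d$ of the far block $B_k$ --- is actually the correct one: the paper claims ``the same vertex $v'$'' from part (i) (the cut vertex between $B_{k-1}$ and $B_k$) still works for $S=S_k\cup\{v\}$, which is false, since that vertex lies in $\langle S_k\setminus\{u_k\}\rangle\subseteq\langle S\setminus\{u_k\}\rangle$ and hence in the union it must avoid; your version repairs this slip. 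One caveat applies equally to you and to the paper: both treatments are informal about hull propagation along a chain when a vertex of $S'$ happens to coincide with a cut vertex (so that a block receives only one hull vertex and the cascade could stall); a fully rigorous proof would need to handle that degenerate case, e.g.\ by noting that Lemma~\ref{adjacency_exchange_indpt}(ii) limits how cut vertices of doubly-met blocks can lie in $S'$.
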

	\begin{proof}
		\begin{enumerate}
			\item[(i)] Let $G$ be a block graph with $\ell$ number of blocks and no blocks isomorphic to $K_2$ and all blocks lie in a single chain. Let $B_1, B_2, B_3, \ldots, B_{\ell}$ be the blocks of $G$ and let $S$ be an E-independent set of $G$. By Lemma~\ref{adjacency_exchange_indpt} (i) and (ii), $S$ contains two vertices from at least one block and not more than two vertices from a single block. Without loss of generality, we may assume that $S$ contains two vertices from $B_1$ and one vertex each from other blocks. Let $S=\{u_1,{u_1}',u_2, u_3,\ldots,u_\ell\}$, where $u_1,{u_1}'\in B_1$, $u_i \in B_i$ for $i\in\{2,3,\ldots,\ell\}$, and there are no three vertices in $S$ forming $K_3$ in $G$ and the vertices are not cut vertices of $G$. Then we have $\langle S\setminus\{u_1\}\rangle =\{S\setminus u_1\}$, $\langle S\setminus\{{u_1}'\}\rangle =\{S\setminus {u_1}'\}$,   $\langle S\setminus\{u_i\}\rangle = V(B_1)\cup V(B_2)\cup \cdots\cup V(B_{i-1})\cup \{v_{i+1},v_{i+2},\ldots,v_\ell\}$ for $i\in\{2,3,\ldots,\ell\}$. This implies that there exists $v\in V(B_\ell)$ such that $\displaystyle v\in \langle S\setminus\{u_\ell\}\rangle \setminus \bigcup_{a\in S\setminus\{u_\ell\}} \langle S\setminus\{a\}\rangle$. Thus $S$ is an E-independent set of size $\ell+1$ elements.

			Now we have to prove that there is no E-independent set of size greater than $\ell+1$. Suppose there is an E-independent set $S'$ with $|S'| \geq \ell+2$. By Lemma \ref{adjacency_exchange_indpt} (i) and (ii), $S'$ contains two vertices from at least one block and not more than two vertices from a single block.\\
			Consider the following two possible cases:
			\begin{description}
				\item[Case 1:] $S'$ contains at least one vertex from each block. 
    
    Since $|S'|\geq \ell+2$, we have $S'\cap V(B_i)=2$ and $S'\cap V(B_j)=2$ for at least two blocks $B_i$ and $B_j$ with $i, j \in \{1, 2, 3, \ldots, \ell\}$. Let $u, u', v, v' \in S'$, where $u, u' \in V(B_i)$ and $v, v' \in V(B_j)$. Then $\langle S'\setminus\{u\}\rangle = \langle S'\setminus\{v\}\rangle = V(G)$, making $S'$ an E-dependent set.
    
				\item[Case 2:] $S'$ does not contain vertices from some blocks. 
    
Here, $S'$ must contain two vertices, each from at least three blocks. Let $S'$ contain two vertices each of the blocks $B_x,B_y,$ and $B_z$.  Then we can see that $\langle S'\setminus\{u\}\rangle \cup \langle S'\setminus\{v\}\rangle=\langle S'\setminus\{u\}\rangle\cup \langle S'\setminus\{w\}\rangle=\langle S'\setminus\{v\}\rangle\cup \langle S'\setminus\{w\}\rangle= \langle S'\rangle$ for $u\in S'\cap V(B_x), v\in S'\cap V(B_y)$ and $w\in S'\cap V(B_z)$. This implies $S'$ an E-dependent set.  
    
			\end{description}
			Thus there is no E-independent set of size greater than $\ell+1$. Hence the exchange number $e_{\Delta}(G) = \ell + 1$.
 
\item[(ii)]	Let $G$ be a block graph containing no blocks isomorphic to the complete graph $K_2$ and let the blocks lie in more than one chain with $k$ being the number of blocks in the longest chain. Then from (i), there is an E-independent set of size $k+1$ in the longest chain, let it be $S_k$, and there exists a vertex $v'$ with $ \displaystyle v' \in \langle {S_k}\setminus\{u_n\}\rangle \setminus \bigcup_{a \in {S_k}\setminus\{u_n\}} \langle {S_k}\setminus\{a\}\rangle$, for some $u_n\in S_k$. Let $S = S_k \cup \{v\}$, where $v$ is any vertex in $G$ not in the longest chain of blocks. Also the same vertex $ \displaystyle v' \in \langle {S}\setminus\{v\}\rangle \setminus \bigcup_{a \in S\setminus\{v\}} \langle S\setminus\{a\}\rangle$. Thus $S$ is an E-independent set of size $k+2$.
			
			Now we have to prove there is no E-independent set of size greater than $k+2$. Suppose there is an E-independent set $S'$ with $|S'|\geq k+3$. By Lemma \ref{adjacency_exchange_indpt} (i) and (ii), $S'$ contains two vertices from at least one block and not more than two vertices from any blocks. From (i), it is not possible to get an E-independent set of size greater than $k+1$ from one single chain of blocks. Then $S'$ must contain vertices from at least two chains of blocks.\\
			Consider the possible cases:
			\begin{description}
				\item[Case 1:] $S'$ contains only elements from two distinct chains of blocks.\\
				Let the chains of blocks be $C_1$ and $C_2$ (clearly, $|V(C_1)\cap S'|\geq 2$ and $|V(C_2)\cap S'|\geq 2$ ) with $u\in V(C_1)\cap S'$ and $w,w' \in V(C_2)\cap S'$. In $\langle {S'}\setminus\{u\}\rangle$ contain all elements of $\langle S'\rangle$ except some elements of $\langle S'\rangle \cap V(C_1)$. Similarly, $\langle {S'}\setminus\{w\}\rangle$ and $\langle {S'}\setminus\{w'\}\rangle$ contain all elements of $\langle S'\rangle$ except some elements of $\langle S'\rangle \cap V(C_2)$. Then $ \displaystyle \langle {S'}\setminus\{u\}\rangle \cup \langle {S'}\setminus\{w\}\rangle = \langle {S'}\setminus\{u'\}\rangle \cup \langle {S'}\setminus\{w'\}\rangle = \langle S'\rangle$. Then by Remark~\ref{remark-e-independent}, $S'$ is an E-dependent set.
				\item[Case 2:] $S'$  contains elements from more than two  distinct chains of blocks.\\
				Let $C_1$, $C_2$ and $C_3$ be  any three  distinct chains of blocks with $u\in V(C_1)\cap S'$, $w \in V(C_2)\cap S'$ and $w'\in V(C_3)\cap S'$. In $\langle {S'}\setminus\{u\}\rangle$, contains all elements of $\langle S'\rangle$ except some  elements of $\langle S'\rangle \cap V(C_1)$. Similarly, $\langle {S'}\setminus\{w\}\rangle$ contains all elements of $\langle S'\rangle$ except some  elements of $\langle S'\rangle \cap V(C_2)$, and $\langle {S'}\setminus\{w'\}\rangle$ contains all elements of $\langle S'\rangle$ except some or all elements of $\langle S'\rangle \cap V(C_3)$. Then $\displaystyle \langle {S'}\setminus\{u\}\rangle \cup \langle {S'}\setminus\{w\}\rangle = \langle {S'}\setminus\{u\}\rangle \cup \langle {S'}\setminus\{w'\}\rangle = \langle {S'}\setminus\{w\}\rangle \cup \langle {S'}\setminus\{w'\}\rangle = \langle S'\rangle$. Then by Proposition~\ref{prop-e-independent}, $S'$ is an E-dependent set.
			\end{description}
			Thus, there is no E-independent set of size greater than $k+2$. Hence, the exchange number $e_{\Delta}(G) = k+2$.
				
				\item[(iii)]  Let $G$ be a block graph containing blocks isomorphic to the complete graph $K_2$, and let $k$ be the maximum number of consecutive blocks that does not contain $K_2$ 
 as a block among the chains of blocks in $G$. Then from (i), there is an E-independent set of size $k+1$ from these consecutive blocks, say $S_k$. Then,  there exists $v'$ with $\displaystyle v' \in \langle {S_k}\setminus\{u_n\}\rangle \setminus \bigcup_{a \in {S_k}\setminus\{u_n\}} \langle {S_k}\setminus\{a\}\rangle$. Now, let $S=S_k\cup\{v\}$ where $v$ is any vertex in $G$ not in the consecutive blocks. Also, the same vertex $\displaystyle v' \in \langle {S}\setminus\{v\}\rangle \setminus \bigcup_{a \in S\setminus\{v\}} \langle S\setminus\{a\}\rangle$. Thus, $S$ is an E-independent set of size $k+2$.
			
			Now, we need to prove that there is no E-independent set of size greater than $k+2$. Suppose there is an E-independent set $S'$ with $|S'|\geq k+3$. By Lemma \ref{adjacency_exchange_indpt} (i) and (ii), $S'$ contains two vertices from at least one block and not more than two vertices from any blocks. Since it is not possible to obtain an E-independent set of size $k+3$ from consecutive blocks of a single chain, $S'$ contains at most $k+1$ elements from consecutive blocks of a single chain.
   So $S'$ contains elements from non-consecutive blocks. From Lemma \ref{adjacency_exchange_indpt} (i), there exist at least two vertices say, $u,v\in S'$ with $uv\in E(G)$, let  $C$ be the consecutive block of chain containing $u$ and $v$. Let $w$ and $x$ be in $S'$ but not in $C$. Then, both $\langle S\setminus\{u\}\rangle$ and $\langle S\setminus\{v\}\rangle$ contain all elements of $\langle S'\rangle$ except some elements of $V(C)\cap \langle S'\rangle$. Also, $\langle S\setminus\{w\}\rangle$ and $\langle S\setminus\{x\}\rangle$ contain  all the elements in $V(C)\cap \langle S'\rangle$. Therefore, $\langle {S'}\setminus\{u\}\rangle \cup \langle {S'}\setminus\{w\}\rangle = \langle {S'}\setminus\{v\}\rangle \cup \langle {S'}\setminus\{x\}\rangle = \langle S'\rangle$. Then by Remark~\ref{remark-e-independent}, $S'$ is an E-dependent set.
			
			Thus, there is no E-independent set of size greater than $k+2$. Hence, the exchange number $e_{\Delta}(G) = k+2$.
			\end{enumerate}
		\end{proof}



\section{The Carath\'{e}odory and Exchange Number in Graph Products}\label{section-product}

 In this section, we give a lower bound for exchange number and Carath\'{e}odory number for $\Delta$-convexity in Cartesian product and exact value for strong and lexicographic product of graphs. 

\subsection{Exchange number}
If a graph contains no triangles, then the only E-independent sets are the two element sets. In this case exchange number is $2$. If $G$ and $H$ does not contain $K_3$, then by the definition of the Cartesian product of graphs $G\Box H$, will not contains $K_3$ and the $E$-independent sets in $G\Box H$ are two-element sets. Therefore $e_{\Delta}(G\Box H)=2$.

\begin{theorem}\label{cartexchange}
 Let $G$ and $H$ be two nontrivial connected graphs with $e_{\Delta}(G), e_{\Delta}(H)>2$, then \\ $e_{\Delta}(G\Box H)\geq (e_{\Delta}(G)-1)(e_{\Delta}(H)-1)+1$.

\end{theorem}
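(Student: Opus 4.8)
The plan is to exhibit an explicit $E$-independent set in $G\Box H$ of cardinality exactly $(e_\Delta(G)-1)(e_\Delta(H)-1)+1$; since $e_\Delta$ is the maximum size of an $E$-independent set, this immediately yields the claimed lower bound.

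First I would record the structural fact that every triangle of $G\Box H$ lies inside a single layer (a $G$-layer $G^h$ or an $H$-layer ${}^gH$): if $(g_1,h_1),(g_2,h_2),(g_3,h_3)$ were pairwise adjacent with one Cartesian edge ``horizontal'' and another ``vertical'', then the remaining pair cannot be adjacent. Consequently the $\Delta$-interval decomposes section-wise: writing $S_h=\{g:(g,h)\in S\}$ and $S^g=\{h:(g,h)\in S\}$, one has $(g,h)\in[S]_\Delta$ iff $g\in[S_h]_\Delta^G$ or $h\in[S^g]_\Delta^H$. Iterating this, a set $W\subseteq V(G\Box H)$ is $\Delta$-convex iff every $G$-section $W_h$ is $\Delta$-convex in $G$ and every $H$-section $W^g$ is $\Delta$-convex in $H$. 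From this criterion I obtain the box identity $\langle X\times Y\rangle=\langle X\rangle\times\langle Y\rangle$ for $X\subseteq V(G)$, $Y\subseteq V(H)$: the right-hand side is convex because each of its sections is $\langle X\rangle$, $\langle Y\rangle$, or $\emptyset$, and the section-wise growth of $X\times Y$ fills it out.

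Next comes the construction. Since $e_\Delta(G)>2$, fix an $E$-independent set $A\subseteq V(G)$ with $|A|=e_\Delta(G)$, a pivot $a_0$ and a witness $a^\ast$, so that $a^\ast\in\langle A\setminus\{a_0\}\rangle$ while $a^\ast\notin\langle A\setminus\{a\}\rangle$ for every $a\in A\setminus\{a_0\}$; choose $B$, $b_0$, $b^\ast$ in $H$ analogously. Put $A'=A\setminus\{a_0\}$ and $B'=B\setminus\{b_0\}$, and define
$$S=(A'\times B')\cup\{(a_0,b_0)\},\qquad |S|=(e_\Delta(G)-1)(e_\Delta(H)-1)+1.$$
I claim $S$ is $E$-independent with pivot $(a_0,b_0)$ and witness $(a^\ast,b^\ast)$. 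One direction is immediate from the box identity: $\langle S\setminus\{(a_0,b_0)\}\rangle=\langle A'\rangle\times\langle B'\rangle$, which contains $(a^\ast,b^\ast)$ by the positive witness properties of $A$ and $B$.

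The main obstacle is the other direction: for every grid vertex $q=(a_i,b_j)\in A'\times B'$ I must show $(a^\ast,b^\ast)\notin\langle S\setminus\{q\}\rangle$. A direct hull-growth analysis is unpleasant, because deleting $q$ only partially damages the $b_j$-row and the $a_i$-column, and the cross-feeding between rows and columns could, a priori, repair the deficiency. The clean route is to trap the hull inside an explicitly convex set that omits the witness. Writing $T=S\setminus\{q\}$, I would take
$$W=\bigl(\langle A\setminus\{a_i\}\rangle\times\langle B\rangle\bigr)\ \cup\ \bigl(\langle A\rangle\times\langle B\setminus\{b_j\}\rangle\bigr).$$
Since $\langle A\setminus\{a_i\}\rangle\subseteq\langle A\rangle$ and $\langle B\setminus\{b_j\}\rangle\subseteq\langle B\rangle$, every $G$-section of $W$ equals $\langle A\rangle$, $\langle A\setminus\{a_i\}\rangle$, or $\emptyset$, and every $H$-section equals $\langle B\rangle$, $\langle B\setminus\{b_j\}\rangle$, or $\emptyset$; by the section criterion $W$ is $\Delta$-convex. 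A short check gives $T\subseteq W$: the vertex $(a_0,b_0)$ sits in the first block, while a grid point $(a_s,b_t)\neq q$ sits in the first block when $a_s\neq a_i$ and in the second when $a_s=a_i$ (which forces $b_t\neq b_j$). Finally $(a^\ast,b^\ast)\notin W$, since membership in the first block would require $a^\ast\in\langle A\setminus\{a_i\}\rangle$ and in the second block $b^\ast\in\langle B\setminus\{b_j\}\rangle$, and these are precisely the witness non-membership conditions guaranteed by the $E$-independence of $A$ and of $B$. Hence $\langle T\rangle\subseteq W$ and $(a^\ast,b^\ast)\notin\langle T\rangle$, which completes the verification that $S$ is $E$-independent and establishes the bound.
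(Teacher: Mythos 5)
Your proof is correct, and it constructs exactly the same extremal set as the paper does: $S=(A\setminus\{a_0\})\times(B\setminus\{b_0\})\cup\{(a_0,b_0)\}$, with the corner $(a_0,b_0)$ as pivot and $(a^\ast,b^\ast)$ as witness, so the cardinality count is identical. The genuine difference is how the two arguments certify the hard half of E-independence, namely that $(a^\ast,b^\ast)\notin\langle S\setminus\{q\}\rangle$ for every grid vertex $q=(a_i,b_j)$. The paper disposes of this in one line: from ``$(a,y)\notin \langle \{a\}\times (S_2\setminus \{b\})\rangle$ or $(x,b)\notin \langle (S_1\setminus \{a\})\times \{b\} \rangle$'' it directly concludes $(x,y)\notin\langle S\setminus\{(a,b)\}\rangle$; this skips exactly the danger you identify, that the hull of the punctured grid might be repaired by cross-feeding between the damaged row and column, and the paper offers no argument excluding that. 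Your route closes the gap: the observation that every triangle of $G\Box H$ lies in a single layer gives the section-wise convexity criterion and the box identity $\langle X\times Y\rangle=\langle X\rangle\times\langle Y\rangle$, and these let you trap $\langle S\setminus\{q\}\rangle$ inside the explicitly convex set $W=\bigl(\langle A\setminus\{a_i\}\rangle\times\langle B\rangle\bigr)\cup\bigl(\langle A\rangle\times\langle B\setminus\{b_j\}\rangle\bigr)$, which contains $S\setminus\{q\}$ but misses the witness. Checking convexity of $W$ through its sections is the essential point (a union of convex sets is not convex in general), and it is what makes your verification airtight where the paper's is only asserted. The cost is the extra machinery; the payoff is rigor plus reusable tools: the same section criterion and box identity would equally well tighten the paper's Theorem~\ref{cartcara} on the Carath\'eodory number of the Cartesian product, whose proof contains the same terse step.
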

\begin{proof}

Let $S_1$ and $S_2$ be E-independent sets in $G$ and $H$ respectively with $|S_1|=e_{\Delta}(G)$ and $|S_2|=e_{\Delta}(H)$. Let $g$ and $h$ be the pivots of $S_1$ and $S_2$ respectively. Then there exists some $x\in \langle S_1 \setminus  \{g\}\rangle$, but $\displaystyle x\notin \bigcup_{a\in S_1 \setminus  \{g\}}\langle S_1 \setminus  \{a\}\rangle $ and some $y\in \langle S_2\setminus \{h\}\rangle$, but $\displaystyle y\notin \bigcup_{b\in S_2 \setminus  \{h\}}\langle S_2 \setminus  \{b\}\rangle $. Let $S=(S_1\setminus \{g\})\times (S_2\setminus \{h\})\cup \{(g,h)\}$. Then $|S|=(|S_1|-1)(|S_2|-1)+1=(e_{\Delta}(G)-1)(e_{\Delta}(H)-1)+1$. 

\noindent {\bf Claim}: $(g,h)$ is the pivot of $S$.

Since $x\in \langle S_1\setminus \{g\}$, for any $h'\in S_2\setminus \{h\}$, $(x,h')\in \langle (S_1\setminus \{g\})\times h'\rangle$. Therefore $\{x\}\times (S_2\setminus \{h\})\subseteq \langle (S_1\setminus \{g\})\times (S_2\setminus \{h\})\rangle\subseteq \langle S\rangle$.

Since $y\in \langle S_2\setminus \{h\}\rangle$, $(x,y)\in \langle \{x\}\times (S_2\setminus \{h\})\rangle\subseteq \langle S\rangle$. Let $(a,b)\in \langle S\setminus \{(g,h)\}\rangle$. Then $(a,y)\notin \langle \{a\}\times (S_2\setminus \{b\})\rangle$ or $(x,b)\notin \langle (S_1\setminus \{a\})\times \{b\} \rangle$ or both. Then $(x,y)\notin \langle S\setminus \{(a,b)\}\rangle $. i.e., $\displaystyle(x,y)\notin \bigcup_{(a,b)\in S\setminus \{(g,h)\}}\langle S\setminus \{(a,b)\}\rangle $. Therefore $S$ is an E-independent set in $G\Box H$ having cardinality $(e_{\Delta}(G)-1)(e_{\Delta}(H)-1)+1$.

\end{proof}
The following theorem gives the sharpness of the lower bound of exchange number in $G\Box H$.

\begin{theorem}
    Let $G$ be a nontrivial connected graph having $e_{\Delta}(G)\geq 3$ and $P_n$ be a path of order $n$, then $e_{\Delta}(G\Box P_n)=e_{\Delta}(G)$.
\end{theorem}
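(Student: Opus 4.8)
The plan is to exploit the fact that, because $P_n$ is triangle-free, the $\Delta$-convexity of $G\Box P_n$ splits cleanly across the $G$-layers. First I would prove the structural lemma that every triangle of $G\Box P_n$ lies inside a single $G$-layer $G^{j}$. Indeed, in a Cartesian product each edge changes exactly one coordinate, so if a triangle used an edge inside a $G$-layer together with an edge inside a $P_n$-layer, its third pair of vertices would agree in neither coordinate and hence could not be adjacent; and no triangle lies in a $P_n$-layer since $P_n$ has none. Writing $S_j=S\cap G^{j}$, this forces the interval operator, and therefore the hull, to act layerwise: $\langle S\rangle=\bigcup_j\langle S_j\rangle$, a disjoint union in which each $\langle S_j\rangle\subseteq G^{j}$ coincides, under the canonical isomorphism $G^{j}\cong G$, with the $\Delta$-hull computed in $G$. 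This decomposition is the engine for everything that follows.

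\textbf{Lower bound.} I would take a maximum E-independent set $T$ of $G$ with pivot $t$ and witness $q\in\langle T\setminus\{t\}\rangle\setminus\bigcup_{a\in T\setminus\{t\}}\langle T\setminus\{a\}\rangle$, and place it inside one layer, $S=T\times\{j_0\}$. Since every hull of a subset of $S$ is simply the corresponding $G$-hull sitting in layer $j_0$, the pair $(t,j_0),(q,j_0)$ certifies that $S$ is E-independent, giving $e_\Delta(G\Box P_n)\ge |T|=e_\Delta(G)$. This is consistent with the value read off from Theorem~\ref{cartexchange} at $e_\Delta(P_n)=2$.

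\textbf{Upper bound (the crux).} Let $S$ be any E-independent set, fix a pivot $p$ with witness $p'\in\langle S\setminus\{p\}\rangle\setminus\bigcup_{a\in S\setminus\{p\}}\langle S\setminus\{a\}\rangle$, and let $j'$ be the layer of $p'$. By the decomposition, whether $p'$ belongs to a given hull depends only on the layer-$j'$ slice of that set, and I would split on the pivot's layer. If $p\in G^{j'}$, then $p'\in\langle S_{j'}\setminus\{p\}\rangle\subseteq\langle S_{j'}\rangle$; but for any $a\in S$ outside $G^{j'}$ one has $(S\setminus\{a\})_{j'}=S_{j'}$, whence $p'\in\langle S\setminus\{a\}\rangle$, contradicting the choice of $p'$. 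Thus $S\subseteq G^{j'}$, so $S$ is E-independent in one copy of $G$ and $|S|\le e_\Delta(G)$. If instead $p\notin G^{j'}$, the same contradiction rules out a second vertex outside $G^{j'}$, so $p$ is the \emph{only} such vertex, and testing $p'$ against $\langle S\setminus\{a\}\rangle$ for $a\in S_{j'}$ shows that $S_{j'}$ is Carath\'eodory independent in $G$.

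\textbf{Main obstacle.} The delicate case is precisely the cross-layer pivot: it yields an E-independent set of the form $S_{j'}\cup\{p\}$ with $S_{j'}$ Carath\'eodory independent, and so only gives $|S|\le c_\Delta(G)+1$. To recover the claimed value one must bound this configuration by $e_\Delta(G)$, i.e.\ establish $c_\Delta(G)+1\le e_\Delta(G)$. Since the Sierksma inequalities (Theorem~\ref{inequalities}) only guarantee $e_\Delta(G)-1\le c_\Delta(G)$, this forces the borderline equality $c_\Delta(G)=e_\Delta(G)-1$; I therefore expect reconciling the cross-layer case with the asserted value to be the hardest and most scrutiny-worthy step of the argument, and the place where any hidden hypothesis on $G$ would have to enter.
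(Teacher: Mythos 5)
Your decomposition lemma, your lower bound, and your single-layer upper-bound case coincide with the paper's own proof (the paper obtains the lower bound by specializing Theorem~\ref{cartexchange} to $e_{\Delta}(P_n)=2$, and its Case~1 is exactly your single-layer case). The real difference is the cross-layer pivot, and here your assessment is the correct one: that case cannot be closed, and the paper's treatment of it is wrong. In the paper's Case~2, Subcase~1, when one layer carries a single vertex $t$ of $S$ and the other layer carries the remaining $e_{\Delta}(G)$ vertices, taking $t$ as pivot gives $\langle S\setminus\{t\}\rangle=\langle S\cap V(G^{h_j})\rangle$, while every competing hull $\langle S\setminus\{(a,b)\}\rangle$ with $(a,b)$ in the big layer equals $\langle (S\cap V(G^{h_j}))\setminus\{(a,b)\}\rangle\cup\{t\}$; so E-dependence at this pivot holds if and only if the big-layer set is Carath\'eodory dependent in $G$. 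The paper simply asserts coverage (``Now also $\langle S\setminus \{(g',h')\}\rangle$ contains $\langle S\cap V(G^{h_j})\rangle$''), which amounts to assuming that every set of size $e_{\Delta}(G)$ in $G$ is C-dependent, i.e.\ that $c_{\Delta}(G)\leq e_{\Delta}(G)-1$. Nothing in the hypotheses guarantees this.

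In fact, your configuration refutes the theorem. If $I$ is a C-independent set of $G$ with witness $x\in\langle I\rangle\setminus\bigcup_{a\in I}\langle I\setminus\{a\}\rangle$, then for $h_1\neq h_2$ in $V(P_n)$ and any $g\in V(G)$ the set $S=(I\times\{h_1\})\cup\{(g,h_2)\}$ is E-independent in $G\Box P_n$ with pivot $(g,h_2)$ and witness $(x,h_1)$: hulls split across layers, so $\langle S\setminus\{(g,h_2)\}\rangle=\langle I\rangle\times\{h_1\}\ni(x,h_1)$, while $\langle S\setminus\{(a,h_1)\}\rangle=\bigl(\langle I\setminus\{a\}\rangle\times\{h_1\}\bigr)\cup\{(g,h_2)\}$ misses $(x,h_1)$ for every $a\in I$. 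Hence $e_{\Delta}(G\Box P_n)\geq c_{\Delta}(G)+1$, and combining your two upper-bound cases with both lower-bound constructions gives the correct value $e_{\Delta}(G\Box P_n)=\max\{e_{\Delta}(G),\,c_{\Delta}(G)+1\}$. The paper's own graph of Figure~\ref{delta} has $c_{\Delta}(G)=e_{\Delta}(G)=n\geq 3$, so for that graph $e_{\Delta}(G\Box P_n)\geq n+1>e_{\Delta}(G)$: the theorem as stated is false, and it becomes true exactly under the additional hypothesis $c_{\Delta}(G)=e_{\Delta}(G)-1$, which Sierksma's inequality $e-1\leq c$ cannot supply. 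So the ``main obstacle'' you flagged is not a weakness of your write-up; it is a genuine error in the paper, and your analysis is the sharper of the two.
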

\begin{proof}
    By Theorem~\ref{cartexchange}, $e_{\Delta}(G\Box P_n)\geq (e_{\Delta}(G)-1)(e_{\Delta}(P_n)-1)+1= (e_{\Delta}(G)-1)(2-1)+1=e_{\Delta}(G)$. Let $S\subset V(G\Box P_n)$ with $|S|=e_{\Delta}(G)+1$. Here arise the following possibilities.\\
    
    \noindent
{\bf Case 1:} $S\subseteq V(G^h)$ for some $h\in V(P_n)$.

    Since $S$ has cardinality $e_{\Delta}(G)+1$ and $S\subset V(G^h)$, $|p_G(S)|=e_{\Delta}(G)+1$ and is an $E$-dependent set in $G$. i.e., $\displaystyle\langle p_G(S)\setminus \{a\}\rangle\subseteq \bigcup_{x\in p_G(S)\setminus \{a\}}\langle p_G(S)\setminus\{x\}\rangle $ for any $a\in p_G(S)$. Then by the definition of Cartesian product of graphs, $\displaystyle\langle S\setminus \{(a,b)\}\rangle\subseteq \bigcup_{(x,y)\in S\setminus \{(a,b)\}}\langle S\setminus\{(x,y)\}\rangle $ for any $(a,b)\in S$. Therefore $S$ is an $E$-dependent set in $G\Box P_n$.\\
    
\noindent
    {\bf Case 2:} $S$ intersects at least two $G$ layers.

Assume $S$ intersects with $V(G^{h_1}), V(G^{h_2}), V(G^{h_3}),\ldots , V(G^{h_r})$, for some $h_1,h_2,h_3,\ldots, h_r\in V(P_n)$.
    Since $P_n$ does not contains $K_3$, any $U\subseteq P_n$, $\langle U\rangle=U$. Then $\langle S\rangle =\langle S\cap V(G^{h_1})\rangle \cup \langle S\cap V(G^{h_2})\rangle \cup \langle S\cap V(G^{h_3})\rangle\cup\ldots \cup\langle S\cap V(G^{h_r})\rangle$. 
    
    Let $(g,h_i)\in S$, $i\in \{1,2,\ldots,r\}$. For $j\neq i$ ($j=1,2,\ldots,r$) there is some $g'\in V(G)$ with $(g',h_j)\in S$. Then $\displaystyle \langle S\setminus \{(g',h_j)\}\rangle =\bigcup_{k\neq j}\langle S\cap G^{h_k}\rangle\cup \langle S\cap V(G^{h_j})\setminus \{(g',h_j)\}\rangle$. Since $e_{\Delta}(G)\geq 3$, $S$ contains at least four vertices. So we can find some $(g',h')\in S$ different from $(g,h_i)$ and $(g',h_j)$. \\
    
\noindent
{\bf Subcase 1:} $r=2$.

In this case $S$ intersects with only two $G$ layers. If $S$ contains at least two vertices each from the intersecting two $G$ layers, then $\langle S\setminus \{(g',h')\}\rangle$ contains $\langle S\cap V(G^{h_j})\rangle$. If one intersecting $G$ layer contains only one vertex of $S$, then the other $G$ layer contains $e_{\Delta}(G)$ number of vertices. Now also $\langle S\setminus \{(g',h')\}\rangle$ contains $\langle S\cap V(G^{h_j})\rangle$.\\
    
\noindent
{\bf Subcase 2:} $r\geq 2$.  

In this case we can find some $(g',h')\in S$ with $h'\neq h_j$. Then $\langle S\setminus \{(g',h')\}\rangle$ contains $\langle S\cap V(G^{h_j})\rangle$.
    
From the above cases, we can conclude that for any $h_i\in V(P_n), i\in \{1,2,\ldots,r\}$, $\displaystyle \langle S\setminus \{(g,h_i)\}\rangle\subseteq \bigcup_{(a,b)\in S\setminus \{(g,h)\}}\langle S\setminus \{(a,b)\}\rangle$. Therefore, $S$ is an $E$-dependent set in $G\Box P_n$.  Hence $e_{\Delta}(G\Box P_n)=e_{\Delta}(G)$.
\end{proof}
\begin{theorem}\label{exstrong}
	Let $G$ and $H$ be two nontrivial connected graphs with at least one of them has diameter greater than two, then $e(G\boxtimes H)= 3$.
\end{theorem}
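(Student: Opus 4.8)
The plan is to prove the two inequalities $e_\Delta(G\boxtimes H)\ge 3$ and $e_\Delta(G\boxtimes H)\le 3$ separately, with the whole argument resting on one structural fact that I will isolate first.

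\textbf{Key Lemma (every edge is a hull set).} I will first show that for nontrivial connected $G,H$, every edge $uv$ of $G\boxtimes H$ satisfies $\langle\{u,v\}\rangle=V(G\boxtimes H)$. The proof has two moves. First, every edge sits inside a $2\times 2$ box $\{g_1,g_2\}\times\{h_1,h_2\}$ with $g_1g_2\in E(G)$ and $h_1h_2\in E(H)$, which induces a $K_4$ lying in the interval of the edge: for a mixed edge $(g_1,h_1)(g_2,h_2)$ the completing vertices are $(g_1,h_2),(g_2,h_1)$, while for a layer edge, say $(g_1,h_1)(g_2,h_1)$, I first bring in a neighbour $h_2$ of $h_1$ (which exists since $H$ is nontrivial connected) so that $(g_1,h_2),(g_2,h_2)$ complete a box. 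Second, such a box propagates: given the box on $\{g_1,g_2\}\times\{h_1,h_2\}$ in the hull and any neighbour $h_3$ of $h_2$, the $G$-layer edge $(g_1,h_2)(g_2,h_2)$ forms a triangle with each of $(g_1,h_3)$ and $(g_2,h_3)$, extending the box to $\{g_1,g_2\}\times\{h_2,h_3\}$; iterating along $H$ gives $\{g_1,g_2\}\times V(H)$, and symmetrically, using the $H$-layer edges through $g_2$ and a neighbour $g_3$ of $g_2$, one adjoins the layer through $g_3$, so by connectedness of $G$ and $H$ the hull exhausts $V(G\boxtimes H)$. I expect this propagation step, together with the bookkeeping of the three adjacency types, to be the main technical obstacle; everything else follows quickly.

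\textbf{Upper bound.} Granting the Key Lemma, I claim every $S\subseteq V(G\boxtimes H)$ with $|S|\ge 4$ is E-dependent, giving $e_\Delta(G\boxtimes H)\le 3$; notice this half needs no diameter hypothesis. Fix any $p\in S$ and verify $\langle S\setminus\{p\}\rangle\subseteq\bigcup_{a\in S\setminus\{p\}}\langle S\setminus\{a\}\rangle$. If $S\setminus\{p\}$ contains an edge $uv$, then $\langle S\setminus\{p\}\rangle=V(G\boxtimes H)$ by the Key Lemma; since $|S|\ge 4$ there is some $a\in S\setminus\{p,u,v\}$, and $S\setminus\{a\}$ still contains $uv$, so $\langle S\setminus\{a\}\rangle=V(G\boxtimes H)$ too and the inclusion holds. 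If instead $S\setminus\{p\}$ has no edge, then $\langle S\setminus\{p\}\rangle=S\setminus\{p\}$, and every $x\in S\setminus\{p\}$ lies in $S\setminus\{a\}\subseteq\langle S\setminus\{a\}\rangle$ for any $a\in S\setminus\{p,x\}$, which exists as $|S|\ge 4$. As $p$ was arbitrary, $S$ is E-dependent. (Alternatively, the Key Lemma yields $c_\Delta(G\boxtimes H)=2$, whence $e_\Delta\le c_\Delta+1=3$ by the Sierksma inequality of Theorem~\ref{inequalities}.)

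\textbf{Lower bound.} Here I use the diameter hypothesis; say $\operatorname{diam}(G)>2$, so there are $g,g'\in V(G)$ with $d_G(g,g')\ge 3$. Fixing any $h\in V(H)$ and writing $v=(g,h)$, $t=(g',h)$, the standard strong-product metric $d\big((a,b),(c,d)\big)=\max\{d_G(a,c),d_H(b,d)\}$ gives $d(v,t)=d_G(g,g')\ge 3$; choosing any neighbour $u$ of $v$ in $G\boxtimes H$, the triangle inequality forces $d(u,t)\ge 2$, so $t\not\sim u$ and $t\not\sim v$. Put $S=\{u,v,t\}$. Then $\langle S\setminus\{t\}\rangle=\langle\{u,v\}\rangle=V(G\boxtimes H)$ by the Key Lemma, while $\langle S\setminus\{u\}\rangle=\{v,t\}$ and $\langle S\setminus\{v\}\rangle=\{u,t\}$ because $t$ is adjacent to neither $u$ nor $v$. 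Since $|V(G\boxtimes H)|=|V(G)|\,|V(H)|\ge 4$, any $w\in V(G\boxtimes H)\setminus\{u,v,t\}$ witnesses $w\in\langle S\setminus\{t\}\rangle\setminus\big(\langle S\setminus\{u\}\rangle\cup\langle S\setminus\{v\}\rangle\big)$, so $t$ is a pivot and $S$ is E-independent of size $3$. Combining the two bounds yields $e_\Delta(G\boxtimes H)=3$.
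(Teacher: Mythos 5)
Your proposal is correct and takes essentially the same approach as the paper: both arguments rest on showing that the $\Delta$-hull of any edge of $G\boxtimes H$ is all of $V(G\boxtimes H)$ via iterated triangle propagation, then exhibit a 3-element E-independent set consisting of an edge together with a distant vertex serving as the pivot, and finally show every 4-element set is E-dependent because any 3-subset containing an edge already hulls to the whole vertex set. Your write-up is somewhat more careful than the paper's (you isolate the edge-hull fact as a lemma covering all three adjacency types of the strong product, and you get the required non-adjacencies from the strong-product metric and the diameter hypothesis rather than from a specially chosen induced path and layer neighbour), but the underlying argument is the same.
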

\proof Assume that $G$ has diameter greater than or equal to two. Then there exist an induced path $g_1g_2g_3$ in $G$. Let $h_1h_2$ be any edge in $H$. Our aim is to prove the set $S=\{(g_1,h_1),(g_1,h_2),(g_3,h_1)\}$ is E-independent in $G\boxtimes H$.\\

\noindent
\textbf{Claim} $(g_3,h_1)$ is the pivot of $S$.

$\langle S \setminus  (g_3,h_1) \rangle=\langle\{(g_1,h_1),(g_1,h_2)\}\rangle=V(G\boxtimes H)$. Because the first iteration contains $N_G(g_1)\times \{h_1,h_2\}$ and $\{g_1\}\times (N_H(h_1)\cap N_H(h_2))$. In the second iteration contains $N_G[g_1]\times (N_H(h_2) \setminus  \{h_1\})$ and $N_G[g_1]\times (N_H(h_1) \setminus  \{h_2\})$. Continue this process until it cover all the vertices of $G\boxtimes H$. Since $g_1$ and $g_3$ are not adjacent in $G$ and the definition of strong product of graphs,  $(g_1,h_1)$  and $(g_3,h_1)$ are not adjacent in $G\boxtimes H$. Similarly $(g_1,h_2)$ and $(g_3,h_1)$ are not adjacent in $G\boxtimes H$. Then $\langle\{(g_1,h_1),(g_3,h_1)\}\rangle=\{(g_1,h_1),(g_3,h_1)\}$ and $\langle\{(g_1,h_2),(g_3,h_1)\}\rangle=\{(g_1,h_2),(g_3,h_1)\}$. Therefore $(g_3,h_1)$ is the pivot of $S$. Hence $S$ is E-independent. So $e(G\boxtimes H)\geq 3$.

Now we have to prove for any four element subset of $V(G\boxtimes H)$ is E-dependent. \\Let $\{(g_1,h_1),(g_2,h_2),(g_3,h_3),(g_4,h_4)\}\subseteq V(G\boxtimes H)$. If all the four vertices are not mutually adjacent, then it is E-dependent. Assume $(g_1,h_1)(g_2,h_2)\in E(G\boxtimes H)$. From the above case, we can say that the convex hull  $\langle \{(g_1,h_1),(g_2,h_2),(g_3,h_3)\}\rangle=V(G\boxtimes H)$ and $\langle\{(g_1,h_1),(g_2,h_2),(g_4,h_4)\}\rangle=V(G\boxtimes H)$. Hence $\{(g_1,h_1),(g_2,h_2),(g_3,h_3),(g_4,h_4)\}$ is E-dependant. Therefore $e(G\boxtimes H)= 3$.\qed

\begin{definition}
	A graph $G$ is said to have the edge-vertex property if there exists an edge $uv$ in $E(G)$ and a vertex $x\in V(G)$, such that $d(u,x)\geq 2$ and $d(v,x)\geq 2$
\end{definition}
\begin{theorem}\label{exlex}
	Let $G$ and $H$ be two nontrivial connected graphs. Then \\ $e_{\Delta}(G\circ H)= \left\{
	\begin{array}{ll}
	
	3, & \hbox{if $G$ has diameter at least two or $H$ has the edge-vertex property;} \\
	2, & \hbox{otherwise.}
	\end{array}
	\right.
	$
\end{theorem}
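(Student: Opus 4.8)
The plan hinges on one structural lemma: for nontrivial connected $G,H$ the $\Delta$-hull of \emph{every} edge of $G\circ H$ equals $V(G\circ H)$. I would prove it by a layer-filling argument, distinguishing a \emph{fiber edge} (both endpoints in one layer ${}^{g}H$, arising from an edge of $H$) from a \emph{cross edge} (endpoints in distinct layers, arising from an edge of $G$). For a fiber edge $(g,h_1)(g,h_2)$, any $G$-neighbour $g'$ of $g$ makes every vertex of ${}^{g'}H$ a common neighbour of the two endpoints, so a single triangle step places the full layer ${}^{g'}H$ in the hull; for a cross edge $(g_1,h_1)(g_2,h_2)$ the vertices $(g_i,h)$ with $h\in N_H(h_i)$ are common neighbours, and connectivity of $H$ then fills ${}^{g_1}H$ and ${}^{g_2}H$. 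Once one whole layer lies in the hull, its internal edges (present because $H$ is connected and nontrivial) are, by the complete cross-adjacency of the product, adjacent to every vertex of each neighbouring layer; hence every layer is filled by induction along a spanning walk of $G$, and connectivity of $H$ completes each layer. This propagation is the main obstacle and essentially the only real work; the rest is bookkeeping.

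Granting the lemma, the upper bound $e_\Delta(G\circ H)\le 3$ is immediate. Any E-independent set $S$ with $|S|\ge 3$ contains an edge by Lemma~\ref{adjacency_exchange_indpt}(i); if moreover $|S|\ge 4$, that edge $\{A,B\}$ survives the deletion of any $x\notin\{A,B\}$, so $\langle S\setminus\{x\}\rangle=V(G\circ H)$ for at least two distinct vertices $x$. Then for every candidate pivot $p$ at least one such $x$ is $\neq p$, whence $\bigcup_{a\in S\setminus\{p\}}\langle S\setminus\{a\}\rangle=V(G\circ H)\supseteq\langle S\setminus\{p\}\rangle$; thus $S$ is E-dependent and no E-independent set has size $4$.

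For the lower bound in the first case I would exhibit an explicit E-independent triple whose pivot is the ``far'' vertex. If $G$ has diameter at least two, pick an induced path $g_1g_2g_3$ (so $g_1g_3\notin E(G)$) and an edge $h_ah_b$ of $H$, and set $S=\{(g_1,h_a),(g_1,h_b),(g_3,h_a)\}$. Then $\langle(g_1,h_a),(g_1,h_b)\rangle=V(G\circ H)$ contains $(g_2,h_a)$, while $(g_1,h_a)(g_3,h_a)$ and $(g_1,h_b)(g_3,h_a)$ are non-adjacent, so those two hulls are just the pairs; hence $(g_2,h_a)$ witnesses that $(g_3,h_a)$ is a pivot. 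If instead $H$ has the edge-vertex property, with edge $h_uh_v$ and vertex $h_x$ satisfying $d_H(h_x,h_u),d_H(h_x,h_v)\ge2$, I would take $S=\{(g,h_u),(g,h_v),(g,h_x)\}$ inside one layer ${}^{g}H$ and use a $G$-neighbour $g'$ of $g$ to produce the witness $(g',h_u)\in\langle(g,h_u),(g,h_v)\rangle=V(G\circ H)$, the two non-adjacencies keeping the remaining hulls trivial. In both constructions the pivot check reduces to two non-adjacency facts, so the verification is short, and with the upper bound this yields $e_\Delta(G\circ H)=3$.

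For the second case I would show every triple is E-dependent, giving $e_\Delta(G\circ H)=2$ (the bound $e_\Delta\ge2$ holds trivially since any two distinct vertices form an E-independent pair). The key observation is that when $G$ is complete and $H$ lacks the edge-vertex property, every edge of $G\circ H$ is dominating: a cross edge dominates because $G$ is complete, while a fiber edge $(g,h_1)(g,h_2)$ fails to dominate only if some $(g,h_C)$ has $d_H(h_C,h_1),d_H(h_C,h_2)\ge2$, which is precisely the excluded edge-vertex property. Consequently a triple that carries an edge must carry at least two, i.e.\ it is a $P_3$ or a triangle; combining $\langle e\rangle=V(G\circ H)$ for each such edge with Proposition~\ref{prop-e-independent} (or a direct case check on the three possible pivots) shows that no vertex can serve as a pivot. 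Triples with no edge are E-dependent by Lemma~\ref{adjacency_exchange_indpt}(i), completing the argument.
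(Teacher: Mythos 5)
Your proposal is correct and follows essentially the same route as the paper's proof: the identical two E-independent triples (an induced path $g_1g_2g_3$ of $G$ paired with an edge of $H$, or an edge of $H$ plus a vertex at distance at least two from both endpoints inside a single layer), the same underlying fact that every edge of $G\circ H$ has $\Delta$-hull equal to $V(G\circ H)$, and the same treatment of triples when $G$ is complete and $H$ lacks the edge-vertex property. Your organization is in fact tighter than the paper's—you isolate the edge-hull claim as an explicit lemma, your domination observation unifies the paper's three-case analysis on the $G$-coordinates, and your union argument for sets of size at least four is more rigorous than the paper's terse closing paragraph—but the mathematical content is the same.
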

\begin{proof}

Assume $G$ and $H$ be two nontrivial connected graphs $G$ has diameter less than two and $H$ does not have the edge-vertex property. Then we have to prove that for any $3$ element subset of $V(G\circ H)$ is E-dependent. Let $\{(g_1,h_1),(g_2,h_2),(g_3,h_3)\}\subseteq V(G\circ H)$.\\

\noindent
\textbf{Case 1:} $g_1,g_2,g_3$ are pairwise distinct vertices.\\
In this case the vertices $(g_1,h_1),(g_2,h_2)$ and $(g_3,h_3)$ are in the different $H$-layers. If $G$ has diameter less than 2, then $G$ is a complete graph. Then the induced subgraph of $\{(g_1,h_1),(g_2,h_2),(g_3,h_3)\}$ is $K_3$. Then $\langle(\{(g_1,h_1),(g_2,h_2),(g_3,h_3)\}) \setminus  \{(g_i,h_i)\}\rangle=V(G\circ H)$, for $i=1,2,3$, hence $\{(g_1,h_1),(g_2,h_2),(g_3,h_3)\}$ is an E-dependent set.\\

\noindent
\textbf{Case 2:} $g_1=g_2\neq g_3$.\\
Let $g_1=g_2=g$. $\langle\{(g,h_1),(g,h_2)\}\rangle= V(G\circ H)$, since the vertices in $N_G(g)\times V(H)$ are adjacent to both $(g,h_1)$ and $(g,h_2)$. Then  $N_G(g)\times V(H)\subseteq \langle\{(g,h_1),(g,h_2)\}\rangle$. Now we can do the same process for $N_G(g)$ vertices and continue this process to get all the vertices of $G\circ H$. Therefore $\langle\{(g,h_1),(g,h_2)\}\rangle=V(G\circ H)$. Since $G$ is complete, in the first iteration of the convex hull of $\{(g,h_1),(g_3,h_3)\}$ contains $V(G) \setminus  \{g,g_3\} \times V(H)$ and $\{g,g_3\} \times V(H)$. Therefore $\langle\{(g,h_1),(g_3,h_3)\}\rangle=V(G\circ H)$. Similarly we get $\langle\{(g,h_2),(g_3,h_3)\}\rangle=V(G\circ H)$. Hence $\{(g,h_1),(g,h_2),(g_3,h_3)\}$ is E-dependent.\\

\noindent
\textbf{Case 3:} $g_1=g_2= g_3$.\\
If $h_1,h_2,h_3$ are mutually non adjacent vertices, then $\{(g,h_1),(g,h_2),(g,h_3)\}$ is E-dependent. Assume $h_1h_2\in E(H)$, then either $h_3$ is adjacent to $h_1$ or $h_3$ because $H$ does not have the edge-vertex property. Assume that $h_1h_2h_3$ be a path in $H$. Then as in the Case 2 above $\langle \{(g,h_1),(g,h_2),(g,h_3)\} \setminus  \{(g,h_i)\}\rangle=V(G\circ H)$, for $i=1,2$. Hence $\{(g,h_1),(g,h_2),(g,h_3)\}$ is E-dependent.

Hence $e(G\circ H)=2$, when $G$ has diameter less than two and $H$ does not have the edge-vertex property.

Assume that $G$ has diameter at least two or $H$ has an edge-vertex property. If $H$ has an edge-vertex property. Then there exists an edge $h_1h_2 \in E(H)$ and $h_3\in V(H)$ such that $d(h_1,h_3)\geq 2$ and $d(h_2,h_3)\geq 2$. Then for any $g\in V(G)$, $\{(g,h_1),(g,h_2),(g,h_3)\}$ is E-independent. As in the case2 above,  $\langle\{(g,h_1),(g,h_2)\}\rangle= V(G\circ H)$. But $\langle \{(g,h_1),(g,h_3)\}\rangle=\{(g,h_1),(g,h_3)\}$ and $\langle \{(g,h_2),(g,h_3)\}\rangle=\{(g,h_2),(g,h_3)\}$. Hence $\{(g,h_1),(g,h_2),(g,h_3)\}$ is E-independent in $G\circ H$.

Suppose  $G$ has diameter at least two. If $H$ has an edge-vertex property, then we can find  a 3-element E-independent set. So assume $H$ does not have the edge-vertex property. Since $G$ has diameter at least two, we can find a path $g_1g_2g_3$ in $G$. Let $h_1h_2\in E(H)$. Consider the set $\{(g_1,h_1)(g_1,h_2)(g_3,h_1)\}$. Then $\langle\{(g_1,h_1),(g_1,h_2)\}\rangle= V(G\circ H)$, but $\langle\{(g_1,h_1),(g_3,h_1)\}\rangle= \{(g_1,h_1),(g_3,h_1)\}$ and $\langle\{(g_1,h_2),(g_3,h_1)\}\rangle= \{(g_1,h_2),(g_3,h_1)\}$. Hence $S$ is E-independent.

We have to prove any four element set in $G\circ H$ is E-dependent. \\ Let $S=\{(g_1,h_1),(g_2,h_2),(g_3,h_3),(g_4,h_4)\}\subseteq V(G\circ H)$.
If $g_i\neq g_j$ and $g_ig_j\notin E(G)$ for i,j=1,2,3,4, then $S$ is E-dependent.
Since $G\boxtimes H\subseteq G\circ H$, if $g_ig_j\in E(G)$, for $i,j=1,2,3,4$ or $h_ih_j\in E(H)$, then $S$ is E-independent.
\end{proof}
\subsection{ Carath\'{e}odory Number}

\begin{theorem}\label{cartcara}
 Let $G$ and $H$ be two nontrivial connected graphs with $c_{\Delta}(G), c_{\Delta}(H)>2$, then \\ $c_{\Delta}(G\Box H)\geq c_{\Delta}(G)c_{\Delta}(H)$.

\end{theorem}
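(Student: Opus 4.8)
The plan is to build, from maximum Carath\'eodory-independent sets in the two factors, a C-independent set in $G\Box H$ whose cardinality is exactly $c_\Delta(G)\,c_\Delta(H)$, which immediately yields the claimed lower bound. Concretely, I would fix a C-independent set $S_1\subseteq V(G)$ with $|S_1|=c_\Delta(G)$ together with a witness $x\in\langle S_1\rangle\setminus\bigcup_{a\in S_1}\langle S_1\setminus\{a\}\rangle$, and likewise a C-independent set $S_2\subseteq V(H)$ with $|S_2|=c_\Delta(H)$ and a witness $y\in\langle S_2\rangle\setminus\bigcup_{b\in S_2}\langle S_2\setminus\{b\}\rangle$; both exist by the definition of the Carath\'eodory number. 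Setting $S=S_1\times S_2$ we have $|S|=c_\Delta(G)\,c_\Delta(H)$, and the whole argument reduces to showing that $(x,y)$ is a Carath\'eodory witness for $S$, that is, $(x,y)\in\langle S\rangle$ while $(x,y)\notin\langle S\setminus\{(a,b)\}\rangle$ for every $(a,b)\in S$.

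The structural fact I would record first is that every triangle of $G\Box H$ lies in a single layer. Since each edge of a Cartesian product changes exactly one coordinate, a short case analysis on the three edges of a triangle shows that $G$-edges and $H$-edges cannot be mixed, so the three vertices share a second coordinate (a $G$-layer) or a first coordinate (an $H$-layer). Two consequences follow. First, if $A\subseteq V(G)$ and $B\subseteq V(H)$ are $\Delta$-convex, then $A\times B$ is $\Delta$-convex in $G\Box H$, because any triangle completing a new vertex lives in one layer and is therefore controlled by convexity of $A$ (in a $G$-layer) or of $B$ (in an $H$-layer). Second, building hulls layer by layer gives the factorization $\langle S_1\times S_2\rangle=\langle S_1\rangle\times\langle S_2\rangle$; in particular $(x,y)\in\langle S_1\rangle\times\langle S_2\rangle=\langle S\rangle$, which settles the membership half.

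The heart of the proof is to bound $\langle S\setminus\{(a,b)\}\rangle$ for a fixed $(a,b)\in S_1\times S_2$. Writing $A=\langle S_1\setminus\{a\}\rangle\times\langle S_2\rangle$ and $B=\langle S_1\rangle\times\langle S_2\setminus\{b\}\rangle$, the elementary inclusion $S\setminus\{(a,b)\}\subseteq A\cup B$ holds because each remaining $(p,q)\in S_1\times S_2$ satisfies $p\neq a$ (placing it in $A$) or $q\neq b$ (placing it in $B$). I would then prove the key closure statement $\langle S\setminus\{(a,b)\}\rangle\subseteq A\cup B$ by induction on the $\Delta$-interval iterations, using the single-layer property: when a new vertex is created by a triangle in a $G$-layer from two vertices of $A\cup B$, either both lie in $A$ (so the new vertex lies in $A$ by convexity), both lie in $B$, or one lies in $A\setminus B$ and the other in $B\setminus A$; this last ``cross'' case is impossible, since the two source vertices share a second coordinate that would then be forced both outside and inside $\langle S_2\setminus\{b\}\rangle$. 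The $H$-layer case is symmetric, the cross case forcing a first coordinate both inside and outside $\langle S_1\setminus\{a\}\rangle$. Finally $(x,y)\notin A$ because $x\notin\langle S_1\setminus\{a\}\rangle$, and $(x,y)\notin B$ because $y\notin\langle S_2\setminus\{b\}\rangle$, whence $(x,y)\notin A\cup B\supseteq\langle S\setminus\{(a,b)\}\rangle$.

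I expect the main obstacle to be precisely the closure statement $\langle S\setminus\{(a,b)\}\rangle\subseteq A\cup B$: in general the $\Delta$-hull of a set contained in a union of two convex sets can escape that union, so this inclusion is not a formality. What rescues it here is the triangle-in-a-single-layer phenomenon, which forbids triangles straddling $A$ and $B$ across different coordinates and thereby makes $A\cup B$ interval-closed relative to the vertices reachable from $S\setminus\{(a,b)\}$. Once this is established, the three required properties of $(x,y)$ follow at once, exhibiting a C-independent set of size $c_\Delta(G)\,c_\Delta(H)$ and hence $c_\Delta(G\Box H)\ge c_\Delta(G)\,c_\Delta(H)$.
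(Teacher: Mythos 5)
Your proposal is correct and rests on the same construction as the paper's own proof: both arguments take maximum C-independent sets $S_1\subseteq V(G)$, $S_2\subseteq V(H)$ with witnesses $x$, $y$, form $S=S_1\times S_2$ of size $c_\Delta(G)c_\Delta(H)$, prove $(x,y)\in\langle S\rangle$ layer by layer, and then show $(x,y)\notin\langle S\setminus\{(a,b)\}\rangle$ for every $(a,b)\in S$. Where you differ is in how that last, crucial step is handled. The paper disposes of it in one line: it asserts that $(a,y)\notin\langle \{a\}\times(S_2\setminus\{b\})\rangle$ or $(x,b)\notin\langle (S_1\setminus\{a\})\times\{b\}\rangle$ and jumps directly to $(x,y)\notin\langle S\setminus\{(a,b)\}\rangle$, with no explanation of how the hull of the punctured set is confined; as written this is an assertion, not a proof. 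You instead prove the precise containment $\langle S\setminus\{(a,b)\}\rangle\subseteq A\cup B$, where $A=\langle S_1\setminus\{a\}\rangle\times\langle S_2\rangle$ and $B=\langle S_1\rangle\times\langle S_2\setminus\{b\}\rangle$, by establishing that $A\cup B$ is itself $\Delta$-convex: every triangle of $G\Box H$ lies in a single layer, products of convex sets are convex, and the ``cross'' case (one source vertex in $A\setminus B$, the other in $B\setminus A$) is impossible because the shared coordinate of a layer triangle would have to lie both inside and outside $\langle S_2\setminus\{b\}\rangle$ (respectively $\langle S_1\setminus\{a\}\rangle$). This closure lemma is exactly the content the paper leaves implicit, and your cross-case analysis is sound, so your write-up buys full rigor at the one point where the published argument is weakest; the paper's version buys brevity, but only by leaving its key step unjustified.
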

\begin{proof}

Let $S_1$ and $S_2$ be C-independent sets in $G$ and $H$ respectively with $|S_1|=c_{\Delta}(G)$ and $|S_2|=c_{\Delta}(H)$. Then there exists some $x\in \langle S_1 \rangle$, but $\displaystyle x\notin \bigcup_{a\in S_1}\langle S_1 \setminus  \{a\}\rangle $ and some $y\in \langle S_2 \rangle$, but $\displaystyle y\notin \bigcup_{b\in S_2} \langle S_2 \setminus  \{b\}\rangle $. Let $S=S_1\times S_2$. Then $|S|= c_{\Delta}(G)c_{\Delta}(H)$. 

\noindent {\bf Claim}: $(x,y)\in \langle S\rangle$, but $\displaystyle(x,y)\notin \bigcup_{(a,b)\in S} \langle S\setminus \{(a,b)\}\rangle $.

Since $x\in \langle S_1\rangle$, for any $h\in S_2$, $(x,h)\in \langle S_1\times h\rangle$. Therefore $\{x\}\times S_2\subseteq \langle S_1\times S_2\rangle= \langle S\rangle$.
Since $y\in \langle S_2 \rangle$, $(x,y)\in \langle \{x\}\times S_2\rangle\subseteq \langle S\rangle$. Let $(a,b)\in \langle S\rangle$. Then $(a,y)\notin \langle \{a\}\times S_2\setminus \{b\}\rangle$ or $(x,b)\notin \langle S_1\setminus \{a\}\times \{b\}\times \rangle$ or both. Then $(x,y)\notin \langle S\setminus \{(a,b)\}\rangle $. i.e., $\displaystyle(x,y)\notin \bigcup_{(a,b)\in S\setminus \{(g,h)\}}\langle S\setminus \{(a,b)\}\rangle $. Therefore $S$ is C-independent set in $G\Box H$ having cardinality $c_{\Delta}(G)c_{\Delta}(H)$. Hence $c_{\Delta}(G\Box H)\geq c_{\Delta}(G)c_{\Delta}(H)$.
Hence $c_{\Delta}(G\Box P_n)=c_{\Delta}(G)$.
\end{proof}
The following theorem gives the sharpness of the lower bound of the Carathéodory number in $G\Box H$.

\begin{theorem}
    Let $G$ be a nontrivial connected graph having $c_{\Delta}(G)\geq 4$ and $P_n$ be a path of order $n$, then $c_{\Delta}(G\Box P_n)=c_{\Delta}(G)$.
\end{theorem}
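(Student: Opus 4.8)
The plan is to reduce everything to a single structural fact about how the $\Delta$-hull interacts with the layer structure of $G\Box P_n$. Since $P_n$ is triangle-free, I first claim that every triangle of $G\Box P_n$ is contained in one $G$-layer. This follows from a short case analysis on the $P_n$-coordinates $h_1,h_2,h_3$ of a would-be triangle $\{(g_1,h_1),(g_2,h_2),(g_3,h_3)\}$: if two of these coordinates coincide and the third differs, the two cross-layer edges force all three $G$-coordinates to be equal, contradicting the remaining $G$-edge; if all three coordinates are distinct, the three edges force $h_1,h_2,h_3$ to be pairwise adjacent in $P_n$, i.e.\ a triangle in $P_n$, which is impossible. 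Consequently a vertex can enter the $\Delta$-interval only by completing a triangle inside its own layer, and iterating the interval operation yields the decomposition $\langle S\rangle=\bigcup_{h\in V(P_n)}\langle S\cap V(G^{h})\rangle_{G^{h}}$, where each $\langle\cdot\rangle_{G^h}$ denotes the $\Delta$-hull computed in $G^{h}\cong G$ and the union is over pairwise disjoint layers.

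For the lower bound I would take a maximum C-independent set $S_0$ of $G$ together with a witness $p_0\in\langle S_0\rangle\setminus\bigcup_{a\in S_0}\langle S_0\setminus\{a\}\rangle$, and place a copy of $S_0$ inside a single layer $G^{h}$. By the decomposition the product-hull of this copy equals its layer-hull, and the same holds for every deletion $S_0\setminus\{a\}$; hence the image of $p_0$ witnesses C-independence in $G\Box P_n$, giving $c_{\Delta}(G\Box P_n)\ge |S_0|=c_{\Delta}(G)$.

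The upper bound is the heart of the argument. Let $S$ be any C-independent set of $G\Box P_n$ and fix a witness $p\in\langle S\rangle\setminus\bigcup_{a\in S}\langle S\setminus\{a\}\rangle$; let $h_0$ be the unique layer with $p\in V(G^{h_0})$. I claim $S\subseteq V(G^{h_0})$. Indeed, if some $a\in S$ lay in a layer $h\ne h_0$, then deleting $a$ would leave $S\cap V(G^{h_0})$ unchanged, so by the decomposition $\langle S\setminus\{a\}\rangle$ would still contain $\langle S\cap V(G^{h_0})\rangle_{G^{h_0}}\ni p$, contradicting $p\notin\langle S\setminus\{a\}\rangle$. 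Thus all of $S$ sits in $G^{h_0}$, and the decomposition shows that the defining conditions for C-independence of $S$ in $G\Box P_n$ coincide with those for C-independence of $S$ in $G^{h_0}\cong G$. Therefore $|S|\le c_{\Delta}(G)$, and combined with the lower bound we conclude $c_{\Delta}(G\Box P_n)=c_{\Delta}(G)$.

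The main obstacle is the structural claim that the $\Delta$-hull splits as a disjoint union over the $G$-layers; once this triangle case analysis is secured, both inequalities follow essentially immediately, the upper bound being the pinch point where the single witness $p$ forces the whole independent set into one layer. I note finally that the hypothesis $c_{\Delta}(G)\ge 4$ is not actually consumed by this reduction; it only serves to place $G\Box P_n$ in the nondegenerate regime $c_{\Delta}(G)>2$ consistent with Theorem~\ref{cartcara}, and the equality holds verbatim throughout the argument above.
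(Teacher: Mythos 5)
Your proof is correct, but it takes a genuinely different route from the paper's. Both arguments ultimately rest on the same structural fact --- since $P_n$ is triangle-free, every triangle of $G\Box P_n$ lies inside a single $G$-layer, so the $\Delta$-hull decomposes layerwise --- but you exploit it more sharply. The paper gets the lower bound by citing Theorem~\ref{cartcara} to write $c_{\Delta}(G\Box P_n)\geq c_{\Delta}(G)c_{\Delta}(P_n)=c_{\Delta}(G)$, an invocation that is formally problematic because that theorem assumes $c_{\Delta}(H)>2$ while $c_{\Delta}(P_n)=1$; your direct embedding of a maximum C-independent set of $G$ into one layer, justified by the layerwise decomposition, repairs this gap. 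For the upper bound, the paper takes an arbitrary set of size $c_{\Delta}(G)+1$ and proves C-dependence by a case analysis (the set lies in one layer versus meets several layers, with subcases on how many vertices each layer receives), and it genuinely consumes the hypothesis $c_{\Delta}(G)\geq 4$ to guarantee enough vertices for that analysis. You instead prove the stronger statement that any C-independent set must lie entirely in the layer $G^{h_0}$ of its witness $p$ --- deleting a vertex from any other layer leaves $S\cap V(G^{h_0})$, and hence $p$, inside the hull --- and then transfer the bound to $G$ via the layer isomorphism. This is cleaner, pinpoints exactly why cross-layer sets fail, and, as you observe, shows the hypothesis $c_{\Delta}(G)\geq 4$ is never used, so the equality holds for every nontrivial connected $G$; the paper's proof, as written, does not yield that strengthening.
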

\begin{proof}
    By Theorem~\ref{cartcara} $c_{\Delta}(G\Box P_n)\geq c_{\Delta}(G)c_{\Delta}(P_n)=c_{\Delta}(G)$. Let $S\subset V(G\Box P_n)$ with $|S|=c_{\Delta}(G)+1$. Here arise the following possibilities.\\

\noindent
    {\bf Case 1:} $S\subseteq V(G^h)$ for some $h\in V(P_n)$.

    Since $S$ has cardinality $e_{\Delta}(G)+1$ and $S\subset V(G^h)$, $|p_G(S)|=c_{\Delta}(G)+1$ and is an C-dependent set in $G$. i.e., $\displaystyle\langle p_G(S)\rangle\subseteq \bigcup_{x\in p_G(S)}\langle p_G(S)\setminus \{a\}\rangle $. Again since $S\subseteq V(G^h)$, $\displaystyle\langle S\rangle\subseteq \bigcup_{(x,y)\in S}\langle S\setminus\{(x,y)\}\rangle $. Therefore $S$ is C-dependent in $G\Box P_n$.\\

\noindent
{\bf Case 2:} $S$ intersects at least two $G$ layers.

Assume $S$ intersects with $V(G^{h_1}), V(G^{h_2}), V(G^{h_3}),\ldots , V(G^{h_r})$, for some $h_1,h_2,h_3,\ldots, h_r\in V(P_n)$.
    Since $P_n$ does not contains $K_3$, any $U\subseteq P_n$, $\langle U\rangle=U$. Then $\langle S\rangle =\langle S\cap V(G^{h_1})\rangle \cup \langle S\cap V(G^{h_2})\rangle \cup \langle S\cap V(G^{h_3})\rangle\cup\ldots \cup\langle S\cap V(G^{h_r})\rangle$. 
    
    Let $(g,h_i)\in S$, $i\in \{1,2,\ldots,r\}$. For $j\neq i$ ($j=1,2,\ldots,r$) there is some $g'\in V(G)$ with $(g',h_j)\in S$. Then $\langle S\setminus \{(g',h_j)\}\rangle =\bigcup_{k\neq j}\langle S\cap G^{h_k}\rangle\cup \langle S\cap V(G^{h_j})\setminus \{(g',h_j)\}\rangle$. Since $c_{\Delta}(G)\geq 4$, $S$ contains at least four vertices. So we can find some $(g',h')\in S$ different from $(g,h_i)$ and $(g',h_j)$.\\

\noindent 
{\bf Subcase 1:} $r=2$.

In this case $S$ intersects with only two $G$ layers. If $S$ contains at least two vertices each from the intersecting two $G$ layers, then $\langle S\setminus \{(g',h')\}\rangle$ contains $\langle S\cap V(G^{h_j})\rangle$. If one intersecting $G$ layer contains only one vertex of $S$, then the other $G$ layer contains $c_{\Delta}(G)$ number of vertices. Now also $\langle S\setminus \{(g',h')\}\rangle$ contains $\langle S\cap V(G^{h_j})\rangle$.\\

\noindent
{\bf Subcase 2:} $r\geq 2$

    In this case we can find some $(g',h')\in S$ with $h'\neq h_j$. Then $\langle S\setminus \{(g',h')\}\rangle$ contains $\langle S\cap V(G^{h_j})\rangle$
    
    From the above cases, we can conclude that for any $h_i\in V(P_n), i\in \{1,2,\ldots,r\}$, $\displaystyle \langle S\rangle\subseteq \bigcup_{(a,b)\in S\setminus \{(g,h)\}}\langle S\setminus \{(a,b)\}\rangle$. Therefore, $S$ is an $E$-dependent set in $G\Box P_n$.\\  Hence $c_{\Delta}(G\Box P_n)=c_{\Delta}(G)$.
\end{proof}

We conclude this article with the following remark.
\begin{remark}
    Let $G$ and $H$ be two nontrivial connected graphs with $xy\in E(G\ast H)$, for $\ast\in\{\boxtimes, \circ\}$, then from the proofs of Theorem \ref{exstrong} and Theorem \ref{exlex} we get $\langle\{x,y\} \rangle=V(G\ast H)$, for $\ast\in \{\boxtimes, \circ\}$. Therefore $c_\Delta(G\ast H)= 2$, for $\ast\in \{\boxtimes, \circ\}$.
\end{remark}
\subsubsection*{Acknowledgment:} Arun Anil acknowledges the financial support from the University of Kerala, for providing the University Post Doctoral Fellowship (Ac EVII 5911/2024/UOK dated 18/07/2024). 
\bibliographystyle{amsplain}
\bibliography{carathbib}

\end{document}